\numberwithin{equation}{section}
\newtheorem{thm}{Theorem}[section]
\newtheorem{prop}[thm]{Proposition}
\newtheorem{lem}[thm]{Lemma}
\theoremstyle{definition}
\newtheorem{defn}[thm]{Definition}
\theoremstyle{remark}
\newtheorem{rem}[thm]{Remark}
\newcommand{\al}{\alpha}
\newcommand{\be}{\beta}
\newcommand{\de}{\delta}
\newcommand{\om}{\omega}
\newcommand{\p}{\partial}
\newcommand{\hatp}{\hat{\partial}_x}
\newcommand{\I}{\infty}
\newcommand{\lec}{\lesssim}
\newcommand{\gec}{\gtrsim}
\newcommand{\Sc}[1]{\mathcal{#1}}
\newcommand{\F}{\mathcal{F}}
\renewcommand{\H}{\mathcal{H}}
\newcommand{\Bo}[1]{\mathbb{#1}}
\newcommand{\R}{\mathbb{R}}
\newcommand{\T}{\mathbb{T}}
\newcommand{\hhat}{\widehat}
\newcommand{\bbar}{\overline}
\newcommand{\ti}{\widetilde}
\newcommand{\LR}[1]{{\langle #1 \rangle }}
\newcommand{\chf}[1]{\mathbf{1}_{#1}}
\newcommand{\norm}[2]{\big\| #1 \big\| _{#2}}
\newcommand{\tnorm}[2]{\| #1 \| _{#2}}
\newcommand{\Shugo}[2]{\big\{ \, #1 \, \big| \, #2 \, \big\}}
\newcommand{\eq}[2]{\begin{equation} \label{#1} \begin{split} #2 \end{split} \end{equation}}
\newcommand{\eqq}[1]{\begin{align*} #1 \end{align*}}
\newcommand{\eqs}[1]{\begin{gather*} #1 \end{gather*}}
\newcommand{\mat}[1]{\begin{smallmatrix} #1 \end{smallmatrix}}
\newcommand{\hx}{\hspace{10pt}}
\newcommand{\hxx}{\hspace{30pt}}
\title[Unconditional uniqueness for periodic BO]{Unconditional uniqueness for the periodic Benjamin-Ono equation by normal form approach}
\author[N. Kishimoto]{Nobu Kishimoto}
\address{Research Institute for Mathematical Sciences, Kyoto University, Kyoto 606-8502, Japan}
\email{nobu@kurims.kyoto-u.ac.jp}
\subjclass[2020]{35Q53; 35A02}
\keywords{Benjamin-Ono equation; periodic boundary condition; unconditional uniqueness}
\begin{document}

\begin{abstract}
We show unconditional uniqueness of solutions to the Cauchy problem associated with the Benjamin-Ono equation under the periodic boundary condition with initial data given in $H^s$ for $s>1/6$.
This improves the previous unconditional uniqueness result in $H^{1/2}$ by Molinet and Pilod (2012).
Our proof is based on a gauge transform and integration by parts in the time variable.
\end{abstract}

\maketitle

%%%%%%%%%%%%%%%%%%%%%%%%%%%%%%%%%%
%%%%%%%%%%%%%%%%%%%%%%%%%%%%%%%%%%
%%%%%%%%%%%%%%%%%%%%%%%%%%%%%%%%%%

\section{Introduction}

In the present article, we study the Cauchy problem associated with the Benjamin-Ono equation with the periodic boundary condition:
\eq{BO}{\p _tu=-\H \p _x^2u+\p _x(u^2),\qquad (t,x)\in (0,T)\times \T ,}
with initial condition
\eq{BO-id}{u|_{t=0}=u_0,}
where $\T :=\R /2\pi \Bo{Z}$ is the one-dimensional torus, $\H$ denotes the periodic Hilbert transform defined by the Fourier multiplier with symbol $-i\,\mathrm{sgn}(n)\chf{n\neq 0}$, and $u_0$ is a given initial datum.
We only consider the real-valued solutions in this article.
The Benjamin-Ono equation \eqref{BO} is a model for one-dimensional long waves in deep stratified fluids  and is known to be completely integrable.

In this article, we show a uniqueness property of rough solutions to \eqref{BO}.
We begin with some definitions.
For $u\in L^2_{\mathrm{loc}}((0,T)\times \mathbb{T})$, we say $u$ is a solution of the equation \eqref{BO} if $u$ satisfies it in the sense of distributions on $(0,T)\times \mathbb{T}$.
If in addition a solution $u$ lies in $L^\infty ((0,T);H^s)$, where $H^s$ denotes the $L^2$-based Sobolev space, then the limit $u|_{t=0}=\lim _{t\to 0}u(t)$ exists in a weaker topology (see Remark~\ref{rem:weak} below) and the initial condition \eqref{BO-id} can be interpreted in this sense.
We also note that any solution $u$ in $C([0,T];H^s)$, if $s\geq 0$, automatically belongs to $C^1([0,T];H^{s-2})$ and satisfies the equation \eqref{BO} in $H^{s-2}$.
Well-posedness of the Cauchy problem \eqref{BO}--\eqref{BO-id} in $H^s$ means existence of a solution $u$ in $C([0,T];H^s)$, uniqueness of solutions, and continuity in $H^s$ of the solution map $u_0\mapsto u$.
We say the Cauchy problem is locally well-posed if $T$ is chosen as a finite constant (depending on $u_0$), and globally well-posed if $T$ can be arbitrarily large (independent of $u_0$).
Depending on the circumstances, uniqueness may be ensured only under some auxiliary conditions.
Then, uniqueness in the whole of $C([0,T];H^s)$ is called \emph{unconditional uniqueness} (in the literature, it sometimes refers to uniqueness in $L^\infty((0,T);H^s)$).

Let us first recall some of the known results on well-posedness of the Cauchy problem \eqref{BO}--\eqref{BO-id} in $H^s$.
The first result employing the dispersive effect of the linear part in the non-periodic setting (i.e., $x\in \R$) was due to Ponce~\cite{P91}, which was refined by Koch and Tzvetkov~\cite{KT03} and Kenig and Koenig~\cite{KK03}.
Tao~\cite{T04} introduced a technique of a gauge transform in the Benjamin-Ono context, which can transform the original equation containing unfavorable nonlinear interactions of low-high type into an equation with milder nonlinearity.
The idea of the gauge transform was adapted to the method of the Fourier restriction norm and pushed down the regularity threshold for well-posedness.
This was done by Burq and Planchon~\cite{BP08} and by Ionescu and Kenig~\cite{IK07} in the non-periodic case, and by Molinet~\cite{M07,M08} in the periodic case, which established global well-posedness in $L^2$ for both $\R$ and $\T$.
Moreover, Molinet and Pilod~\cite{MP12} gave a simplified proof of these results in $H^s$ for $s\ge 0$, which even complemented them by showing unconditional uniqueness (uniqueness in $L^\I ((0,T);H^s)$) for $s>1/4$ in the non-periodic case and for $s\ge 1/2$ in the periodic case.
Finally, G\'erard et al.~\cite{GKT20p} recently obtained a sharp global well-posedness result for the periodic problem in the whole range of subcritical regularities $s>-1/2$%
\footnote{For $s<0$, the quadratic nonlinearity is not well-defined in the distributional framework, and one needs to extend the notion of solutions and well-posedness. See \cite{GKT20p} for details.} 
by exploiting the complete integrability of the equation.

In this article, we study unconditional uniqueness in the periodic case by a different approach: successive applications of integration by parts with respect to $t$.
This simple technique, while it had been used before to derive nonlinear smoothing effect under the presence of dispersion (see, \mbox{e.g.}, \cite{BMN97}, \cite{TT04}), came to be recognized as a useful tool to establish unconditional uniqueness for nonlinear dispersive equations after a work of Babin \mbox{et al.}~\cite{BIT11} on the periodic Korteweg-de~Vries (KdV) equation and a subsequent work of Kwon and Oh~\cite{KO12} on the modified KdV equation.
It can also be regarded as a variant of the (Poincar\'e-Dulac) normal form reduction in the theory of ordinary differential equations, as pointed out by Guo \mbox{et al.}~\cite{GKO13} who applied the method to the one-dimensional cubic Schr\"odinger equation.
Recently, this method has also been adapted to the non-periodic setting and to problems of different type; see, e.g., \cite{KOY20}, \cite{K-all} and references therein.

Our main result reads as follows:
\begin{thm}\label{thm}
The solution to the Cauchy problem \eqref{BO}--\eqref{BO-id} is unique in the class $C([0,T];H^s(\T ))$ for $s>1/6$.
\end{thm}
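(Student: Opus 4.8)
The plan is to combine the gauge transform of Tao and Molinet with successive integration by parts in the time variable, carried out on the difference of two solutions.

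\emph{Set-up and gauge.} Since $\int_{\T}u\,dx$ is conserved and the change of variables $u(t,x)\mapsto u(t,x-ct)$ with $c=\frac1\pi\int_{\T}u_0$ maps solutions of \eqref{BO} to mean-zero solutions and preserves $C([0,T];H^s(\T))$, I may assume $\int_{\T}u\,dx=0$. Let $u,\ti u\in C([0,T];H^s(\T))$ be two solutions with $u(0)=\ti u(0)$ and set $v:=u-\ti u$. Let $F:=\p_x^{-1}u$ be the mean-zero primitive, so $\p_tF=-\H\p_xu+u^2-c(t)$ with $c(t)$ a spatial constant, and pass to the gauge variable $w:=P_{hi}(e^{-iF/2}u)$, where $P_{hi}$ projects onto frequencies $|n|\gec1$; define $\ti w$ analogously from $\ti u$. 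A direct computation, following Tao and Molinet, shows that $w$ solves an equation $\p_tw=i\p_x^2w+\Sc N$ in which $\Sc N$ no longer contains the resonant low$\times$high$\to$high quadratic interaction that obstructs time integration by parts for \eqref{BO}: writing $\phi(k)=k|k|$ and $\Omega_2=\phi(n)-\phi(n_1)-\phi(n_2)$, the surviving quadratic interactions of $\Sc N$ either are of high$\times$high type, for which $|\Omega_2|$ is large compared with the derivative they carry, or carry the low-frequency factor essentially undifferentiated; there are in addition cubic and higher-order terms. Subtracting the equations for $w$ and $\ti w$ gives a closed equation for $W:=w-\ti w$, linear in $(v,W)$, whose coefficients are built from $u,\ti u$ and from $e^{-iF/2}-e^{-i\ti F/2}=e^{-i\ti F/2}\big(e^{-i\p_x^{-1}v/2}-1\big)$; the last factor is Lipschitz in $v$ with one derivative to spare, and all interactions in the $W$-equation keep the structure above.

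\emph{Normal form reduction.} Passing to the interaction representation and to Duhamel form, I integrate by parts in $t'$ on each non-resonant frequency interaction: schematically $e^{-i\phi(n)t'}\widehat{\Sc N}(n,t')=\frac{d}{dt'}\big[(-i\Omega)^{-1}(\cdots)\big]-(-i\Omega)^{-1}\p_{t'}(\cdots)$, and whenever $\p_{t'}$ falls on a factor $\hat w$ or $\hat{\ti w}$ I substitute the equation back in. This produces (a) boundary terms at $t'=0$ and $t'=t$, the former vanishing since $w(0)=\ti w(0)$; (b) high$\times$high quadratic terms, now carrying the gain $\Omega_2^{-1}$, which outweighs the derivative in $\Sc N$ with room to spare; (c) new cubic terms with bounded multipliers $\sim\LR{n_j}/\LR{n_{\max}}$. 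On the non-resonant part of the cubic terms I integrate by parts once more to gain the cubic modulation; what remains is the genuinely resonant cubic contribution, estimated directly, together with quartic remainders. The same steps are applied to the terms generated by differencing the gauge.

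\emph{Estimates and conclusion.} I then bound, for some $\sigma\in(1/6,s]$, each resulting term in $C([0,\tau];H^\sigma)$ by $\tau^\th\,P\big(\tnorm{u}{C([0,T];H^s)},\tnorm{\ti u}{C([0,T];H^s)}\big)\,\tnorm{v}{C([0,\tau];H^\sigma)}$ for some $\th>0$ and a polynomial $P$, using $\ell^2$-based bilinear and trilinear estimates together with the modulation gains; a frequency truncation (absorbing high frequencies into the $H^s$-tails of $u$ and $\ti u$) makes the pointwise-in-time bilinear correction small, so that $v\mapsto v+(\text{correction})$ is bi-Lipschitz on $C([0,\tau];H^\sigma)$ and the boundary terms are controlled. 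The binding term is the resonant cubic contribution: an $\ell^2$-H\"older and Young count against the weights $\LR{n_1}^{-\sigma}\LR{n_2}^{-\sigma}\LR{n_3}^{-\sigma}$, with the modulation gain inherited from the gauged quadratic step, closes exactly for $\sigma>1/6$. A continuous-induction argument then gives $W\equiv0$ on a short interval whose length depends only on $\tnorm{u}{C([0,T];H^s)}$ and $\tnorm{\ti u}{C([0,T];H^s)}$, and hence, by iteration, on all of $[0,T]$; therefore $v\equiv0$, i.e.\ $u=\ti u$.

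\emph{Main obstacle.} I expect two points to be the crux. First, deriving and bookkeeping the gauged equation for the \emph{difference} $W$: since the gauge depends on the solution, one must check that every term stays genuinely multilinear in $(v,u,\ti u)$ and inherits the modulation and smoothing structure --- in effect reworking Tao's gauge computation for differences of solutions rather than a single solution. Second, the resonant cubic estimate at the threshold $s=1/6$, which forces one to use the precise shape of the cubic resonance function and the high$\times$high frequency localization produced by the gauge, rather than crude derivative counting.
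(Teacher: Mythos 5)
Your overall strategy --- gauge transform followed by repeated integration by parts in the time variable, with a low/high frequency splitting and a short-time Lipschitz closure --- is exactly the strategy of the paper. However, as written the proposal has a substantive gap at its core: the normal form machinery cannot be run on the equation in the form you leave it. You keep the gauge factor $e^{-iF/2}$ (and the difference $e^{-iF/2}-e^{-i\tilde F/2}$) as a solution-dependent \emph{coefficient} and describe the main terms as ``quadratic interactions.'' But differentiation by parts in $t$ on the Fourier side requires knowing the frequency content and the time derivative of \emph{every} factor in each product, and the exponential is neither frequency-localized nor does its time derivative (which brings in $\mathcal{H}\partial_x u$) have a usable multilinear structure. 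The paper's resolution is the identity $V=e^{-i\partial_x^{-1}u}=-i\hat{\partial}_x^{-1}v$ together with $u=\bar{V}P_{\neq c}v$: the gauge factor becomes a \emph{linear} function of the new unknown $v$, so the main nonlinearity is an exact trilinear form $\sum m_k(n,n_1,n_2,n_3)\hat v\hat v\hat{\bar v}$ with explicit multipliers and resonance function $\Phi=n|n|-\sum_j n_j|n_j|$, on which two rounds of integration by parts can actually be performed, and for which the crucial lower bound $|\Phi|\gtrsim |n_{12}||n_{23}|\wedge|n_{13}||n_{23}|$ is available. You identify this as your ``main obstacle'' but do not resolve it; without it the scheme you describe is not defined.

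Two further points. Your accounting of where $s>1/6$ enters does not match any correct version of the argument: in the trilinear formulation the purely resonant cubic term is harmless (bounded on $\ell^2_s$ for all $s\ge 0$), and the threshold $1/6$ first appears in the \emph{quintic} terms generated by the first integration by parts --- indeed already in proving that those quintuple sums converge absolutely, which is itself a step that must be justified before the formal manipulations make sense. And since the uniqueness is \emph{unconditional}, you may not assume the solutions are limits of smooth solutions; the derivation of the gauged equation and all interchanges of summation, differentiation, and substitution of the equation have to be justified by regularizing the rough solution itself ($u_N=P_{\le N}u$, with the commutator error $G_N$), which your sketch does not address. The remaining architecture (resonant/non-resonant splitting with a large parameter, separate treatment of frequencies $|n|\le N$, contraction on a short interval) is sound and matches the paper.
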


\begin{rem}\label{rem:weak}
The uniqueness assertion also holds in the class $L^\I((0,T);H^s(\T ))$, $s>1/6$.
To see this, let $u$ be a solution in $L^\I((0,T);H^s(\T ))$, then $u\in W^{1,\infty}((0,T);H^{s-2})\subset C((0,T);H^{s-2})$ by the equation.
In particular, $u(t)$ has limits in $H^{s-2}$ as $t\to 0,T$ and hence $u\in C([0,T];H^{s-2})$.
By interpolation, for any $s'\in (s-2,s)$ it holds that $\norm{u(t)-u(t')}{H^{s'}}\lec \norm{u(t)-u(t')}{H^{s-2}}^{(s-s')/2}$ for almost every $t,t'\in (0,T)$.
Therefore, $u\in C([0,T];H^{s'})$ for any $s'\in (\max \{ s-2,1/6\},s)$, and Theorem~\ref{thm} can be applied to show uniqueness.
\end{rem}

\begin{rem}
It turns out that twice applications of integration by parts in $t$ (normal form reduction) suffice to prove our result, which is similar to the cases of KdV \cite{BIT11} and modified KdV \cite{KO12} on the torus.
It is, however, not likely that further application of normal form reduction can weaken the regularity restriction.
In fact, we encounter the requirement $s>1/6$ in the control of the \emph{resonant} parts just after the first application (see Lemmas~\ref{lem:N1R}, \ref{lem:N2}, and \ref{lem:N3R} below).
It is not clear whether the threshold $s=1/6$ is optimal or not.
\end{rem}

The plan of this article is as follows.
The proof begins with applying a suitable gauge transform, which is similar to that of \cite{M08} and described in Section~\ref{sec:gauge}, to prevent the derivative in the nonlinearity from falling onto high-frequency components.
In Section~\ref{sec:multi} we give multilinear estimates of various terms arising in the normal form reduction steps.
The main theorem is proved by use of these estimates in Section~\ref{sec:proof}.

At the end, we introduce some notation.
For a Banach space $X$, we abbreviate $C([0,T];X)$ to $C_TX$.
In this article, the Fourier coefficients of a $2\pi$-periodic function $f$ are defined by 
\[ \F f(n):=\frac{1}{2\pi} \int _0^{2\pi}f(x)e^{-inx}\,dx,\qquad n\in \Bo{Z},\]
so that the inverse Fourier transform of a sequence $g=\{ g(n)\}_{n\in \Bo{Z}}$ is given by
\[ \F^{-1}g(x):=\sum _{n\in \Bo{Z}}g(n)e^{inx},\qquad x\in \Bo{T}.\]
We write $P_{\pm}:=\F ^{-1}\chf{\pm n>0}\F$, so that $\H =-iP_++iP_-$, and
\eqs{P_{c}f:=\F ^{-1}\chf{n=0}\F f=\F f(0)=\frac{1}{2\pi}\int _0^{2\pi}f(x)\,dx,\\
P_{\neq c}f:=\F ^{-1}\chf{n\neq 0}\F f=f-\F f(0).
}
For a mean-zero function $f$ on $\T$, define
\eqq{\p _x^{-1}f(x):=\F ^{-1}\Big[ \frac{1}{in}\F f(n)\Big] (x)=\frac{1}{2\pi}\int _0^{2\pi}\int _{\theta}^xf(y)\,dy\,d\theta .}
We also write $\hatp :=P_c+\p _x$.
Note that 
\[ \bbar{P_{\pm}f}=P_{\mp}\bbar{f},\qquad \p _x^{-1}\p _x=\p _x\p _x^{-1}P_{\neq c}=P_{\neq c},\qquad P_{\pm}\hatp =P_{\pm}\p _x,\]
and also that $\hatp$ is a homeomorphism from $H^s(\T )$ to $H^{s-1}(\T )$ for any $s\in \R$ with the inverse mapping given by $\hatp ^{-1}=P_c+\p _x ^{-1}P_{\neq c}$.
Finally, for $s\in \R$ and $1\le p\le \I$ we use the weighted $\ell^p$ space $\ell^p_s(\mathbb{Z})$ defined by
\[ \ell ^p_s(\Bo{Z}):=\Big\{ \om :\Bo{Z}\to \Bo{C} ~\Big| ~\tnorm{\om}{\ell ^p_s}:=\tnorm{\LR{\cdot}^s\om}{\ell ^p}<\infty \Big\} ,\qquad \LR{n}:=(1+n^2)^{1/2}.\]

%%%%%%%%%%%%%%%%%%%%%%%%%%%%%%%%%%%%%%%%%%%
%%%%%%%%%%%%%%%%%%%%%%%%%%%%%%%%%%%%%%%%%%%

\bigskip
\section{Gauge transform}\label{sec:gauge}

Let $u\in C_TH^s$ ($s\ge 0$) be a real-valued solution of \eqref{BO} (in the sense of distributions).
First, note that the right-hand side of \eqref{BO} is a well-defined distribution in $C_TH^{-2}$.
Testing the equation against $\phi (t)e^{inx}$ with $\phi \in C_c^\I (0,T)$ and $n\in \Bo{Z}$, we see that $\hhat{u}(t,n)=\F [u(t,\cdot )](n)\in C([0,T])$ satisfies 
\[ \p _t\hhat{u}(t,n)=-in|n|\hhat{u}(t,n)+in\sum _{n'\in \Bo{Z}}\hhat{u}(t,n')\hhat{u}(t,n-n')\qquad \text{in $\Sc{D}'_t(0,T)$}\]
for each $n\in \Bo{Z}$, where the sum is absolutely convergent, and thus the right-hand side is continuous in $t\in [0,T]$.
Hence, $\hhat{u}(\cdot ,n)\in C^1([0,T])$ and the above equation is satisfied in the classical sense.
In particular, the mean value $P_{c}u(t,\cdot )$ is independent of $t$.
In view of the change of the unknown function
\eqq{u(t,x)\quad \mapsto \quad u(t,x-2tP_cu_0)-P_cu_0,}
we may focus without loss of generality on mean-zero solutions, i.e., $P_cu(t,\cdot )=\hhat{u}(t,0)\equiv 0$.

\begin{defn}
We define the gauge transform $v$ of a real-valued mean-zero solution $u\in C_TL^2(\T )$ of \eqref{BO} as follows:
\eq{utov}{V(t):=e^{-i\p _x^{-1}u(t)},\quad v(t):=i\hatp V(t)=iP_c(e^{-i\p _x^{-1}u(t)})+e^{-i\p _x^{-1}u(t)}u(t).}
\end{defn}

Note that $v\in C_TH^s$ if $u\in C_TH^s$ and $s\in [0,1]$, by Lemma~\ref{lem:exp} below.
Moreover, we have
\eq{u_via_v}{&u=P_+u+P_-u=P_+u+\bbar{P_+u},\qquad u=e^{i\p _x^{-1}u}i\p _xe^{-i\p _x^{-1}u}=\bbar{V}P_{\neq c}v, \\
&\qquad\qquad P_+u=P_+\big( \bbar{V}P_{\neq c}v\big) =P_+\big( \bbar{V}P_+v\big) +P_+\big( \bbar{V}P_-v\big) .}

Now, we derive the equation for $v$.
We first consider the gauge transform of regularized solutions $u_N(t):=P_{\le N}u(t)$,%
\footnote{Here, we consider regularization of a solution (which is no longer a solution of \eqref{BO}) rather than the solution with a regularized initial datum, in order to justify the equation for $v$.
This is because general solutions of \eqref{BO} are not necessarily approximated by smooth solutions.
} 
where $P_{\le N}:=\F ^{-1}_n\chf{|n|\le N}\F _x$ for $N>0$.
Set
\eqq{V_N(t):=e^{-i\p _x^{-1}u_N(t)},\quad v_N(t):=i\hatp V_N(t).}
Since the equations
\eqq{\p _tu_N&=-\H \p _x^2u_N+\p _x(u_N^2)+G_N,\\
-i\p _t\p _x^{-1}u_N&=i\H \p _xu_N-iP_{\neq c}(u_N^2)-i\p _x^{-1}G_N}
hold in the classical sense with $G_N:=P_{\le N}\p _x(u^2)-\p _x(u_N^2)$, we have
\eqq{&(\p _t+\H \p _x^2)V_N\\
&=V_N\big( i\H \p _xu_N-iP_{\neq c}(u_N^2)-i\p _x^{-1}G_N\big) +\H \big[ V_N(-i\p _xu_N-u_N^2)\big] \\
&=P_{\neq c}\big[ V_N(i\H )\p _xu_N\big] -(i\H )\big[ V_N\p _xu_N\big] -i(P_{\neq c}-i\H )(V_Nu_N^2) \\
&\hx +iP_c(u_N^2)V_N +P_c\big[ V_N(i\H \p _xu_N-iu_N^2)\big] -iV_N\p _x^{-1}G_N\\
&=-2P_+\big[ V_N\cdot P_-\p _xu_N\big] +2P_-\big[ V_N\cdot P_+\p _xu_N\big] -2iP_-\big[ V_Nu_N^2\big] \\
&\hx +iP_c(u_N^2)V_N +P_c\big[ V_N(i\H \p _xu_N-iu_N^2)\big] -iV_N\p _x^{-1}G_N.
}
Since $\hatp$ commutes with $\p _t+\H  \p _x^2$, we obtain
\eq{eq_vN}{&(\p _t+\H \p _x^2)v_N\\
&=-2iP_+\p _x\big[ V_N\cdot P_-\p _xu_N\big] +2iP_-\p _x\big[ V_N\cdot P_+\p _xu_N\big] +2P_-\p _x\big[ V_Nu_N^2\big] \\
&\hx -P_c(u_N^2)\hatp V_N -P_c\big[ V_N(\H \p _xu_N-u_N^2)\big] +\hatp \big[ V_N\p _x^{-1}G_N\big] .}

The following estimates for the gauge part are easily verified.
\begin{lem}\label{lem:exp}
For $0\le s\le 1$ and real-valued mean-zero $f,g\in H^s$, we have
\eqq{\norm{e^{-i\p _x^{-1}f}}{H^{s+1}}&\lec 1+\tnorm{f}{H^s}^2,\\
\norm{e^{-i\p _x^{-1}f}-e^{-i\p _x^{-1}g}}{H^{s+1}}&\lec \big( 1+\tnorm{f}{H^s}^2+\tnorm{g}{H^s}^2\big) \tnorm{f-g}{H^s}.}
\end{lem}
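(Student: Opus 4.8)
The plan is to exploit two elementary facts about $F:=e^{-i\p_x^{-1}f}$, rather than expanding the exponential as a series (a naive term-by-term estimate using that $H^{s+1}(\T)$ is an algebra would only give an \emph{exponential} bound in $\norm{f}{H^s}$, not the polynomial one asserted). First, since $f$ is real-valued and mean-zero, $\p_x^{-1}f$ is real, so $|F|\equiv1$ and hence $\norm{F}{L^\I}=1$ and $\norm{F}{L^2}\lec1$. Second, by the chain rule $\p_xF=-i(\p_x\p_x^{-1}f)F=-ifF$, so $F$ picks up one derivative essentially for free.

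For the first estimate, I would use $s+1\ge1$ to write $\norm{F}{H^{s+1}}\lec\norm{F}{L^2}+\norm{\p_xF}{H^s}=\norm{F}{L^2}+\norm{fF}{H^s}$. Taking first $s=0$, the bound $\norm{fF}{L^2}\le\norm{f}{L^2}\norm{F}{L^\I}=\norm{f}{L^2}$ gives $\norm{F}{H^1}\lec1+\norm{f}{L^2}$. For general $s\in[0,1]$ I would then invoke the standard tame product estimate (``$H^s$ is a module over $H^1$'' for $|s|\le1$), $\norm{fF}{H^s}\lec\norm{f}{H^s}\norm{F}{H^1}$, which combined with the $s=0$ bound and $\norm{f}{L^2}\le\norm{f}{H^s}$ yields $\norm{F}{H^{s+1}}\lec1+\norm{f}{H^s}\bigl(1+\norm{f}{L^2}\bigr)\lec1+\norm{f}{H^s}^2$.

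For the second estimate, I would connect the two phases linearly: with $h_\tau:=\tau f+(1-\tau)g$ and $H_\tau:=e^{-i\p_x^{-1}h_\tau}$ one has $\tfrac{d}{d\tau}H_\tau=-i\,\p_x^{-1}(f-g)\,H_\tau$, hence
\[ e^{-i\p_x^{-1}f}-e^{-i\p_x^{-1}g}=-i\,\p_x^{-1}(f-g)\int_0^1H_\tau\,d\tau. \]
Since $s+1>1/2$, $H^{s+1}(\T)$ is a Banach algebra, so the right-hand side is bounded in $H^{s+1}$ by $\norm{\p_x^{-1}(f-g)}{H^{s+1}}\sup_{\tau\in[0,1]}\norm{H_\tau}{H^{s+1}}$ (Minkowski's inequality for the integral). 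Now $\norm{\p_x^{-1}(f-g)}{H^{s+1}}\sim\norm{f-g}{H^s}$ (both are mean-zero, by the mapping property of $\p_x^{-1}$), while applying the first estimate to $h_\tau$ and using $\norm{h_\tau}{H^s}\le\norm{f}{H^s}+\norm{g}{H^s}$ gives $\sup_\tau\norm{H_\tau}{H^{s+1}}\lec1+\norm{f}{H^s}^2+\norm{g}{H^s}^2$. This is exactly the claimed bound.

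The step I expect to need the most care is the product estimate $\norm{fF}{H^s}\lec\norm{f}{H^s}\norm{F}{H^1}$ in the low-regularity range $s\in[0,1/2]$: there $f$ need not lie in $L^\I$, so one cannot simply apply the Moser-type estimate to $f$, and instead one must place all the derivatives on $f$ and absorb the resulting loss into the slack enjoyed by $F$ (which lies in $H^1\cap L^\I$, while only $H^s$ with $s\le1$ is required of it); the borderline value $s=1/2$ would then be covered by interpolating the two adjacent cases. The conceptual point, though, is simply that the degree-two (rather than exponential) dependence on $\norm{f}{H^s}$ is dictated by $|F|\equiv1$ together with $\p_xF=-ifF$.
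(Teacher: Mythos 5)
Your proposal is correct. The first estimate is proved exactly as in the paper: both arguments rest on $|e^{-i\p_x^{-1}f}|\equiv 1$ (so the $L^2$ and $L^\I$ norms are $O(1)$), the identity $\p_x e^{-i\p_x^{-1}f}=-ife^{-i\p_x^{-1}f}$, and the tame product estimate $\tnorm{fF}{H^s}\lec\tnorm{f}{H^s}\tnorm{F}{H^1}$ for $s\in[0,1]$, bootstrapped from the $s=0$ case. For the difference estimate you take a genuinely different route. The paper first proves the $s=0$ case directly, bounding $\tnorm{F-G}{L^2}$ by $\tnorm{\p_x^{-1}(f-g)}{L^\I}$ via $|e^{ia}-e^{ib}|\le|a-b|$ and writing $\p_x(F-G)=-if(F-G)-i(f-g)G$, and then bootstraps to $s\in(0,1]$ with the same product estimate applied to this two-term decomposition. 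You instead integrate along the segment $h_\tau=\tau f+(1-\tau)g$ to get the exact factorization $F-G=-i\,\p_x^{-1}(f-g)\int_0^1 H_\tau\,d\tau$, and then use only the algebra property of $H^{s+1}(\T)$ ($s+1>1/2$) together with the already-proved first estimate applied to $h_\tau$ (which is still real-valued and mean-zero, so this is legitimate). Your version avoids the two-step $s=0$-then-$s>0$ bootstrap for the difference and replaces the product estimate with the simpler algebra property, at the modest cost of a vector-valued (Bochner/Minkowski) integral, whose measurability is easily checked since $\tau\mapsto H_\tau$ is continuous into $L^2$ and bounded in $H^{s+1}$; the paper's version is slightly more elementary and keeps the whole proof at the level of pointwise identities. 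Both yield the stated quadratic dependence on the $H^s$ norms.
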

\begin{proof}
The first inequality with $s=0$ follows from $\norm{e^{-i\p _x^{-1}f}}{L^2}\lec 1$ and $\norm{\p _xe^{-i\p _x^{-1}f}}{L^2}\lec \tnorm{f}{L^2}$.
For $s\in (0,1]$, we deduce 
\[ \norm{\p _xe^{-i\p _x^{-1}f}}{H^s}=\norm{fe^{-i\p _x^{-1}f}}{H^s}\lec \tnorm{f}{H^s}\norm{e^{-i\p _x^{-1}f}}{H^1}\]
from the Sobolev inequality and apply the estimate for $s=0$.
Concerning the second inequality, we notice
\eqq{\norm{e^{-i\p _x^{-1}f}-e^{-i\p _x^{-1}g}}{L^2}&\lec \norm{\p _x^{-1}(f-g)}{L^\I}\lec \tnorm{f-g}{L^2},\\
\norm{\p _x[e^{-i\p _x^{-1}f}-e^{-i\p _x^{-1}g}]}{L^2}&\lec \tnorm{f}{L^2}\norm{\p _x^{-1}(f-g)}{L^\I}+\tnorm{f-g}{L^2}\norm{e^{-i\p _x^{-1}g}}{L^\I}\\
&\lec \big( 1+\tnorm{f}{L^2}\big) \tnorm{f-g}{L^2},
}
which verifies the case $s=0$.
For $s\in (0,1]$, the Sobolev inequality and the estimates for $s=0$ obtained above imply
\eqq{
&\norm{\p _x[e^{-i\p _x^{-1}f}-e^{-i\p _x^{-1}g}]}{H^s}\\
&\lec \tnorm{f}{H^s}\norm{e^{-i\p _x^{-1}f}-e^{-i\p _x^{-1}g}}{H^1}+\tnorm{f-g}{H^s}\norm{e^{-i\p _x^{-1}g}}{H^1}\\
&\lec \tnorm{f}{H^s}\big( 1+\tnorm{f}{L^2})\tnorm{f-g}{L^2}+\big( 1+\tnorm{g}{L^2}\big) \tnorm{f-g}{H^s},
}
which yields the claimed estimate.
\end{proof}

By Lemma~\ref{lem:exp}, we see that $(u_N,v_N,V_N)\to (u,v,V)$ in $C_T(H^s\times H^s\times H^{s+1})$ as $N\to \I$ if $0\le s\le 1$.
Also note that $\p _x^{-1}G_N\to 0$ in $C_TH^{-1}$ as $N\to \I$.
Furthermore, we see 
\eqq{&\norm{P_\pm (fP_\mp \p _xg)}{H^{-1}}=\norm{P_\pm (P_\pm fP_\mp \p _xg)}{H^{-1}}\\
&\le \Big[ \sum _{N_1\sim N_2\gec N\ge 1}+\sum _{N\sim N_1\gg N_2\ge 1}\Big] N^{-1}\norm{P_NP_\pm (P_{N_1}P_\pm f\cdot P_{N_2}P_\mp \p _xg)}{L^2}\\
&\lec \sum _{N_1\sim N_2\ge 1}N_2\norm{P_{N_1}f}{L^2}\norm{P_{N_2}g}{L^2}+\sum _{N_1\gg N_2\ge 1}N_1^{-1}N_2\norm{P_{N_1}f}{L^\I}\norm{P_{N_2}g}{L^2}\\
&\lec \tnorm{f}{H^1}\tnorm{g}{L^2}.}
Hence, if $0\le s\le 1$ the right-hand side of \eqref{eq_vN} converges, in $C_TH^{-2}$ for instance, to
\eqq{&-2iP_+\p _x\big[ V\cdot P_-\p _xu\big] +2iP_-\p _x\big[ V\cdot P_+\p _xu\big] +2P_-\p _x\big[ Vu^2\big] \\
&-P_c(u^2)\hatp V -P_c\big[ V(\H \p _xu-u^2)\big] .}
After substituting
\eqs{P_+\p _xu=P_+\p _x(\bbar{-i\hatp ^{-1}v}P_{\neq c}v),\qquad P_-\p _xu=P_-\p _x(-i\hatp ^{-1}vP_{\neq c}\bar{v}),\\
Vu^2=\bbar{-i\hatp ^{-1}v}(P_{\neq c}v)^2,}
we obtain the equation for $v$ as
\eq{eq_v}{(\p _t+\H \p _x^2)v&=2iP_+\p _x\big[ \hatp ^{-1}v\cdot P_-\p _x(\hatp ^{-1}vP_{\neq c}\bar{v})\big] \\
&\hx +2iP_-\p _x\big[ \hatp ^{-1}v\cdot P_+\p _x(\hatp ^{-1}\bar{v}P_{\neq c}v)\big] \\
&\hx +2iP_-\p _x\big[ \hatp ^{-1}\bar{v}(P_{\neq c}v)^2\big] +R[u],\\
R[u]&:=-P_c(u^2)\hatp V -P_c\big[ V(\H \p _xu-u^2)\big] ,
}
which is satisfied in the sense of distributions.
Note that all the terms but $\p _tv$ are well-defined as distributions in $C_TH^{-2}$.
Taking the Fourier transform in $x$, we obtain the following equations:
\eqq{\p _t\hhat{v}(n)+in|n|\hhat{v}(n)&=\sum _{\mat{n_1,n_2,n_3\in \Bo{Z}\\ n=n_{123}}}m_1(n,n_1,n_2,n_3)\hhat{v}(n_1)\hhat{v}(n_2)\hhat{\bar{v}}(n_3)\\
&\hx +\sum _{\mat{n_1,n_2,n_3\in \Bo{Z}\\ n=n_{123}}}m_2(n,n_1,n_2,n_3)\hhat{v}(n_1)\hhat{\bar{v}}(n_2)\hhat{v}(n_3)\\
&\hx +\sum _{\mat{n_1,n_2,n_3\in \Bo{Z}\\ n=n_{123}}}m_3(n,n_1,n_2,n_3)\hhat{\bar{v}}(n_1)\hhat{v}(n_2)\hhat{v}(n_3)\\
&\hx +\F [R[u]](n),}
where $n_{ijk\ldots}$ stands for $n_i+n_j+n_k+\cdots$, $\hat{n}:=n-i\chf{n=0}$, and
\eqq{m_1(n,n_1,n_2,n_3)&:=2i\frac{nn_{23}}{\hat{n}_1\hat{n}_2}\chf{n>0}\chf{n_{23}<0}\chf{n_3\neq 0},\\
m_2(n,n_1,n_2,n_3)&:=2i\frac{nn_{23}}{\hat{n}_1\hat{n}_2}\chf{n<0}\chf{n_{23}>0}\chf{n_3\neq 0},\\
m_3(n,n_1,n_2,n_3)&:=2i\frac{n}{\hat{n}_1}\chf{n<0}\chf{n_2n_3\neq 0}.}
Notice that
\eqq{
n=n_{123},\quad m_1(n,n_1,n_2,n_3)\neq 0&\qquad \Longrightarrow & n_1~&>~n,-n_{23}~>~0,\\
n=n_{123},\quad m_2(n,n_1,n_2,n_3)\neq 0&\qquad \Longrightarrow & -n_1~&>~-n,n_{23}~>~0.}
In particular, the following bound on multipliers is available:
\eq{bd:multiplier}{n=n_{123}\qquad \Longrightarrow\qquad |m_k(n,n_1,n_2,n_3)|\lec \frac{\LR{n}}{\min\limits _{1\le j\le 3}\LR{n_j}}\qquad (k=1,2,3).}

We now introduce a new unknown function 
\eq{vtoom}{\om (t,n):=e^{itn|n|}\hhat{v}(t,n),}
which obeys the following equation:
\eq{eq_om}{\p _t\om (n)&=\sum _{\mat{n_1,n_2,n_3\in \Bo{Z}\\ n=n_{123}}}e^{it\Phi}\tilde{m}_1(n,n_1,n_2,n_3)\om (n_1)\om (n_2)\om ^*(n_3)\\
&\hx +\sum _{\mat{n_1,n_2,n_3\in \Bo{Z}\\ n=n_{123}}}e^{it\Phi}m_2(n,n_1,n_2,n_3)\om (n_1)\om ^*(n_2)\om (n_3)\\
&\hx +\sum _{\mat{n_1,n_2,n_3\in \Bo{Z}\\ n=n_{123}}}e^{it\Phi}m_3(n,n_1,n_2,n_3)\om ^*(n_1)\om (n_2)\om (n_3)\\
&\hx +\Sc{R}[u,\om ](n)\\
&=:\Sc{N}[\om ](n)+\Sc{R}[u,\om ](n),
}
where
\eqs{\Phi :=n|n|-n_1|n_1|-n_2|n_2|-n_3|n_3|,\\
\tilde{m}_1:=m_1\chf{n_{12}n_{13}\neq 0},\qquad \om ^*(t,n):=\bbar{\om (t,-n)},\\
\begin{split}
\Sc{R}[u,\om ](n):=&\;\sum _{\mat{n_1,n_2,n_3\in \Bo{Z}\\ n=n_{123}}}e^{it\Phi}m_1(n,n_1,n_2,n_3)\chf{n_{12}n_{13}=0}\om (n_1)\om (n_2)\om ^*(n_3)\\
&+e^{itn|n|}\F [R[u]](n).
\end{split}}

Observe that all the sums over $n_1,n_2,n_3$ in the equation \eqref{eq_om} converge absolutely for each $n$ if $\om \in \ell ^2(\Bo{Z})$.
In fact, by \eqref{bd:multiplier} and Young's inequality, for $s\ge 0$ we have
\eqq{\LR{n}^{s-1}|\Sc{N}[\om ](n)|&\lec \sum _{n=n_{123}}\frac{\max\limits _{1\le j\le 3}\LR{n_j}^s}{\min\limits_{1\le j\le 3}\LR{n_j}}|\om (n_1)||\om (n_2)||\om ^*(n_3)|\\
&\lec \tnorm{\om}{\ell ^2_s}\tnorm{\om}{\ell ^1_{-1}}\tnorm{\om }{\ell ^2}\lec \tnorm{\om}{\ell ^2_s}^3.}
In particular, any solution $u\in C_TH^s(\T )$ with $0\le s\le 1$ defines $\om \in C_T\ell ^2_s(\Bo{Z})$ which is $C^1([0,T])$ in $t$ for each $n\in \Bo{Z}$ and satisfies \eqref{eq_om} in the classical sense, with a bound
\eq{est:dtom}{\tnorm{\p _t\om}{C_T\ell ^2_{s-2}}\lec \tnorm{\om}{C_T\ell ^2_s}^3+\tnorm{R[u]}{C_TH^{s-2}}.}
It is easily verified by Lemma~\ref{lem:exp} that $R[u]\in C_TH^s$ as soon as $u\in C_TH^s$ if $0\le s\le 1$.

Note that restricting to positive frequencies reduces the equation \eqref{eq_om} to a simpler one: 
\eq{eq_om+}{\p _t\om (n)&=\sum _{\mat{n_1,n_2,n_3\in \Bo{Z}\\ n=n_{123}}}e^{it\Phi}\tilde{m}_1(n,n_1,n_2,n_3)\om (n_1)\om (n_2)\om ^*(n_3)\\
&\hx +\Sc{R}[u,\om ](n),\qquad\qquad n>0.}
Since we consider real-valued solutions of \eqref{BO}, we only have to bound $\om$ in positive frequencies; see Section~\ref{sec:proof} for details.
However, the full equation \eqref{eq_om} is also needed when we apply normal form reduction.

%%%%%%%%%%%%%%%%%%%%%%%%%%%%%%%%%%%%%%%%%%%%%%%%%%%%%
%%%%%%%%%%%%%%%%%%%%%%%%%%%%%%%%%%%%%%%%%%%%%%%%%%%%%

\bigskip
\section{Normal form reduction and multilinear estimates}\label{sec:multi}

\subsection{First normal form reduction}
Let us start with the equation \eqref{eq_om+}, restricting to positive frequencies.
Let $M>0$ be a large constant, which will be chosen at the end of the proof of uniqueness depending on the solutions we consider.
Decompose the equation into ``resonant'' and ``non-resonant'' parts as
\eqq{\p _t\om (n)&=\Big[ \sum _{\mat{n_1,n_2,n_3\in \Bo{Z}\\ n=n_{123}\\ |\Phi |\le M}}+\sum _{\mat{n_1,n_2,n_3\in \Bo{Z}\\ n=n_{123}\\ |\Phi |>M}}\Big] e^{it\Phi}\tilde{m}_1(n,n_1,n_2,n_3)\om (n_1)\om (n_2)\om ^*(n_3)+\Sc{R}[u,\om ](n)\\
&=:\Sc{N}_R[\om ](n)+\Sc{N}_{N\!R}[\om ](n)+\Sc{R}[u,\om ](n),\qquad n>0.}

The estimate for $\Sc{R}$ is easy.
\begin{lem}\label{lem:R}
Let $0\le s\le 1$.
Then, we have
\eqq{\norm{\Sc{R}[u,\om ]}{C_T\ell ^2_s}&\lec 1+\tnorm{u}{C_TH^s}^4+\tnorm{\om}{C_T\ell ^2_s}^3,\\
\norm{\Sc{R}[u,\om ]-\Sc{R}[\ti{u},\ti{\om}]}{C_T\ell ^2_s}&\lec \Big( 1+\tnorm{u}{C_TH^s}^4+\tnorm{\ti{u}}{C_TH^s}^4\Big) \tnorm{u-\ti{u}}{C_TH^s}\\
&\quad +\Big( \tnorm{\om}{C_T\ell ^2_s}^2+\tnorm{\ti{\om}}{C_T\ell ^2_s}^2\Big) \tnorm{\om -\ti{\om}}{C_T\ell ^2_s}}
for any real-valued mean-zero functions $u,\ti{u}\in C_TH^s$ and any $\om ,\ti{\om}\in C_T\ell ^2_s$.
\end{lem}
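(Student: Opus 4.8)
The plan is to estimate the two constituents of $\Sc{R}[u,\om]$ — the resonant trilinear sum (the first term in its definition) and the gauge remainder $e^{itn|n|}\F[R[u]](n)$ — separately, and then take the supremum over $t\in[0,T]$. Throughout, $u$ (hence $V=e^{-i\p_x^{-1}u}$) and $\om$ are treated as unrelated data, as in the statement.

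\emph{The trilinear sum.} The key point is that the constraints $n=n_{123}$ and $n_{12}n_{13}=0$ carried by the summand force one ``input'' frequency to coincide with $n$: if $n_{12}=0$ then $n_3=n$, and if $n_{13}=0$ then $n_2=n$. Hence in either case one of the three factors $\om(n_1),\om(n_2),\om^*(n_3)$ equals $\om(n)$ or $\om^*(n)$ and can be pulled out of the sum, which then runs over the single free index $n_1$ (the other two being determined). On the support of $m_1$ one has $n>0$ and $n_1>n$, so $|m_1|\lec 1$ there, and the remaining bilinear sum is of the form $\sum_{n_1}|\om(n_1)|^2$ or $\sum_{n_1}|\om(n_1)||\om(-n_1)|$, both $\lec\norm{\om}{\ell^2}^2$ by Cauchy--Schwarz (recall $\om^*(m)=\bbar{\om(-m)}$). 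Therefore $\LR{n}^s$ times the trilinear sum is $\lec\norm{\om}{\ell^2}^2\LR{n}^s|\om(\pm n)|$, whose $\ell^2_n$-norm is $\lec\norm{\om}{\ell^2_s}^3$. No derivative is lost, so the full $\ell^2_s$-regularity survives; this is the only place where the structure of the multiplier is used.

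\emph{The gauge remainder.} Since $u$ is real and mean-zero, $\p_x^{-1}u$ is real, so $|V|\equiv 1$ and $\norm{V}{L^\I}=1$. As $P_c[V(\H\p_xu-u^2)]$ is a constant function, it suffices to control its modulus: $|P_c(Vu^2)|\le\frac1{2\pi}\norm{u}{L^2}^2$, while for $P_c(V\H\p_xu)=\frac1{2\pi}\int V\cdot\H\p_xu\,dx$ one integrates by parts — $\H\p_x$ has symbol $|n|$, which is symmetric for the pairing $\int fg$ — to rewrite it as $\frac1{2\pi}\int(\H\p_xV)\cdot u\,dx$, and then $\norm{\H\p_xV}{L^2}\le\norm{\p_xV}{L^2}=\norm{uV}{L^2}\le\norm{u}{L^2}$ gives $|P_c(V\H\p_xu)|\lec\norm{u}{L^2}^2$. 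For the remaining term $P_c(u^2)\hatp V$ one uses $|P_c(u^2)|=\frac1{2\pi}\norm{u}{L^2}^2$ together with $\norm{\hatp V}{H^s}\le|P_cV|+\norm{\p_xV}{H^s}\le 1+\norm{V}{H^{s+1}}\lec 1+\norm{u}{H^s}^2$ from Lemma~\ref{lem:exp}. Adding up, $\norm{R[u]}{H^s}\lec 1+\norm{u}{H^s}^4$, hence by Parseval $\norm{e^{itn|n|}\F[R[u]]}{\ell^2_s}\lec 1+\norm{u}{H^s}^4$; together with the trilinear bound and $\sup_t$ this is the first assertion.

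\emph{The difference estimate.} This is the same computation with the usual telescoping. For the trilinear part, writing $\om(n_1)\om(n_2)\om^*(n_3)-\ti\om(n_1)\ti\om(n_2)\ti\om^*(n_3)$ as a sum of three terms, each carrying one factor of $\om-\ti\om$ (or its $*$) and two factors from $\{\om,\ti\om\}$, the argument above applies term by term and produces $(\norm{\om}{\ell^2_s}^2+\norm{\ti\om}{\ell^2_s}^2)\norm{\om-\ti\om}{\ell^2_s}$. For $R[u]-R[\ti u]$ one expands each product difference, e.g. $P_c(u^2)\hatp V-P_c(\ti u^2)\hatp\ti V=(P_c(u^2)-P_c(\ti u^2))\hatp V+P_c(\ti u^2)\hatp(V-\ti V)$ and analogously inside $P_c[V(\H\p_xu-u^2)]$, and inserts $|P_c(u^2)-P_c(\ti u^2)|\lec(\norm{u}{L^2}+\norm{\ti u}{L^2})\norm{u-\ti u}{L^2}$ along with the Lipschitz bounds $\norm{V-\ti V}{H^{s+1}}\lec(1+\norm{u}{H^s}^2+\norm{\ti u}{H^s}^2)\norm{u-\ti u}{H^s}$ and $\norm{\p_x(V-\ti V)}{L^2}+\norm{V-\ti V}{L^\I}\lec(1+\norm{u}{L^2})\norm{u-\ti u}{L^2}$ from (the proof of) Lemma~\ref{lem:exp}. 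Absorbing lower powers via $t^k\lec 1+t^4$ yields $(1+\norm{u}{H^s}^4+\norm{\ti u}{H^s}^4)\norm{u-\ti u}{H^s}$, and a final $\sup_t$ completes the argument. I expect the only non-routine ingredients to be the collapse of the trilinear sum described above and the integration by parts that keeps $P_c(V\H\p_xu)$ at the level $\norm{u}{L^2}^2$ instead of losing a derivative on $u$; everything else is Hölder, Cauchy--Schwarz, Parseval and Young's inequality.
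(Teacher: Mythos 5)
Your argument is correct and follows essentially the same route as the paper's (very terse) proof: the observation that $|m_1|\chf{n_{12}n_{13}=0}\lec 1$ together with the collapse of the sum onto a single free index gives the $\norm{\om}{\ell^2}^2\norm{\om}{\ell^2_s}$ bound for the trilinear part, and $R[u]$ is handled via Lemma~\ref{lem:exp} with $|V|\equiv 1$; your extra care with $P_c(V\H\p_x u)$ (moving $\H\p_x$ onto $V$, or equivalently a Sobolev product estimate) is exactly the detail the paper leaves implicit. The telescoping for the difference bounds is standard and matches the stated powers.
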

\begin{proof}
Observing $|m_1(n_{123},n_1,n_2,n_3)|\chf{n_{12}n_{13}=0}\lec 1$, the $C_T\ell ^2_s$-norm of the first term of $\Sc{R}[u,\om ]$ is bounded by $\tnorm{\om}{C_T\ell ^2}^2\tnorm{\om}{C_T\ell ^2_s}$ for any $s\ge 0$.
Estimates on $R[u]$ in $C_TH^s$ can be easily obtained from Lemma~\ref{lem:exp} when $0\le s\le 1$.
\end{proof}

The resonant part $\Sc{N}_R$ is also controlled in $H^s$ if $s\ge 0$.
\begin{lem}\label{lem:N_R}
Let $s\ge 0$.
Then, we have
\eqq{\norm{\Sc{N}_R[\om ]}{C_T\ell ^2_s}&\lec M\tnorm{\om}{C_T\ell ^2_s}^3,\\
\norm{\Sc{N}_R[\om ]-\Sc{N}_R[\ti{\om}]}{C_T\ell ^2_s}&\lec M\Big( \tnorm{\om}{C_T\ell ^2_s}^2+\tnorm{\ti{\om}}{C_T\ell ^2_s}^2\Big) \tnorm{\om -\ti{\om}}{C_T\ell ^2_s}}
for any $\om ,\ti{\om} \in C_T\ell ^2_s$.
\end{lem}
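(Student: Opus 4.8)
The plan is to use the explicit form of the resonance function $\Phi$ on the support of $\ti m_1$ \emph{both} to estimate the multiplier and to shrink the range of summation, and then to invoke only elementary convolution inequalities on $\ell^2(\Bo{Z})$. Since the phases $e^{it\Phi}$ are discarded by the triangle inequality, all bounds below are pointwise in $t\in[0,T]$ with $t$-independent constants, so taking the supremum over $t$ at the end yields the $C_T\ell^2_s$ statements. The first step is to record the resonance identity: on the support of the summand defining $\Sc N_R[\om](n)$ one has $n=n_{123}>0$, $n_{23}<0$, $n_1=n+|n_{23}|$, $n_3\neq 0$, $n_{12}n_{13}\neq 0$, and, writing $\Phi=n^2-n_1^2-n_2|n_2|-n_3|n_3|$ and substituting $n=n_1+n_{23}$, a short computation distinguishing the signs of $n_2,n_3$ (with $n_{23}<0$ forcing $n_3<0$ when $n_2\ge0$) gives
\[ \Phi=\begin{cases} 2\,n_{13}n_{23}, & n_2\ge0,\\ 2\,n_{12}n_{23}, & n_2<0\le n_3,\\ -2\big(n|n_{23}|+n_2n_3\big), & n_2<0,\ n_3<0.\end{cases}\]
Because $|n_{12}|,|n_{13}|\ge1$ and $n,|n_{23}|\ge1$, one has $|\Phi|\ge2|n_{23}|$ in every case, so the resonant condition $|\Phi|\le M$ implies $1\le|n_{23}|\le M/2$; moreover in the first and third cases $\min(n,|n_{23}|)^2\le n|n_{23}|\le|\Phi|/2\le M/2$.

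Next I would establish the pointwise multiplier bound. Since $\hat n_1=n_1\ge\max(n,|n_{23}|)$ and $|\hat n_2|\gec\LR{n_2}$ (it equals $1$ if $n_2=0$), one has, with no use of resonance,
\[ |m_1(n,n_1,n_2,n_3)|=\frac{2n|n_{23}|}{n_1|\hat n_2|}\lec\frac{\min(n,|n_{23}|)}{\LR{n_2}}\qquad\text{on the whole support},\]
and in particular $|m_1|\lec1$ whenever $n_2<0\le n_3$ (there $|\hat n_2|=|n_2|\ge|n_{23}|$). I then split $\Sc N_R=\Sc N_R^{(1)}+\Sc N_R^{(2)}$, where $\Sc N_R^{(1)}$ retains the summands with $|m_1|\le C_0$ for a suitable absolute $C_0$ and $\Sc N_R^{(2)}$ the rest. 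Using the identity above, on the support of $\Sc N_R^{(2)}$ one has $n|n_{23}|\lec M$, hence $|m_1|\lec M^{1/2}$, in addition to $1\le|n_{23}|\le M/2$: the case $n_2<0\le n_3$ lands entirely in $\Sc N_R^{(1)}$; the case $n_2,n_3<0$ always has $n|n_{23}|\le M/2$; and for $n_2\ge0$, $|m_1|>C_0$ forces $n_2<n/2$, so $|\Phi|=2|n-n_2||n_{23}|\ge n|n_{23}|$.

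For $\Sc N_R^{(1)}$ I would use $|m_1|\lec1$, bound $\LR n^s\lec\LR{n_1}^s$ (valid since $0<n<n_1$), and perform the $(n_2,n_3)$-sum with $k:=n_2+n_3$ fixed; with $\Om:=|\om|*|\om^*|$ (so $\norm{\Om}{\ell^\I}\le\norm{\om}{\ell^2}^2$ by Cauchy--Schwarz) this gives $\LR n^s|\Sc N_R^{(1)}[\om](n)|\lec\sum_{1\le|k|\le M/2}\LR{n-k}^s|\om(n-k)|\,\Om(k)$, and taking $\ell^2_n$-norms by Minkowski, translation invariance of $\ell^2_s$, and $\#\{k\}\lec M$ yields $\norm{\Sc N_R^{(1)}[\om]}{\ell^2_s}\lec M\norm{\om}{\ell^2_s}\norm{\om}{\ell^2}^2$. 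For $\Sc N_R^{(2)}$ I would instead keep $|m_1|\lec\min(n,|n_{23}|)/\LR{n_2}$ together with $\LR n^s\lec\LR{n_1}^s$; the $(n_2,n_3)$-sum now produces $\ti\Om:=(\LR{\cdot}^{-1}|\om|)*|\om^*|$, which lies in $\ell^2$ with $\norm{\ti\Om}{\ell^2}\le\norm{\LR{\cdot}^{-1}|\om|}{\ell^1}\norm{\om}{\ell^2}\lec\norm{\om}{\ell^2}^2$ by Young's inequality (here $\LR{\cdot}^{-1}\in\ell^2(\Bo{Z})$, so $\LR{\cdot}^{-1}|\om|\in\ell^1$ by Cauchy--Schwarz). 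Since the summand is supported on $1\le|k|\lec M/n$, one obtains $\LR n^s|\Sc N_R^{(2)}[\om](n)|\lec\sum_{1\le|k|\lec M/n}\min(n,|k|)\LR{n-k}^s|\om(n-k)|\,\ti\Om(k)$, and Cauchy--Schwarz in $k$ combined with the elementary uniform bound $\sum_{1\le|k|\lec M/n}\min(n,|k|)^2\lec M^{3/2}$ (split at $n=M^{1/2}$) and a reordering of the $(n,k)$-sums gives $\norm{\Sc N_R^{(2)}[\om]}{\ell^2_s}\lec M^{3/4}\norm{\om}{\ell^2_s}\norm{\om}{\ell^2}^2$. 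Adding the two pieces proves the first inequality; the Lipschitz estimate follows by applying the same bounds to the telescoped (conjugate-)trilinear difference $\Sc N_R[\om]-\Sc N_R[\ti\om]$.

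The crux is $\Sc N_R^{(2)}$, the resonant but high-frequency interactions: there $|m_1|$ is genuinely of size $\sim M^{1/2}$ (for instance $n=|n_{23}|\sim M^{1/2}$, $n_2=0$, $n_3=-n$, $n_1=2n$, where $|\Phi|=2n^2\sim M$), so the factor $M$ in the statement cannot be produced merely by counting resonant frequencies. What makes it work is that the resonance identity simultaneously forces $n|n_{23}|\lec M$, which both caps the multiplier and truncates the $n_{23}$-summation; this is precisely what brings the total down to $M^{3/4}\le M$.
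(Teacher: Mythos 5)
Your proof is correct, but it extracts the factor $M$ by a genuinely different mechanism than the paper. The paper converts the cutoff $\chf{|\Phi |\le M}$ into the weight $M/|\Phi |$, and then uses the lower bound $|\Phi |\gec |n_{23}|\,(|n_{12}|\wedge |n_{13}|)$ to absorb $1/|\Phi |$ into the multiplier, reducing everything to a single trilinear convolution estimate (its \eqref{est1}) proved by Young's inequality; a side benefit is that exactly the same estimate, with slightly perturbed exponents, is recycled to produce the $M^{-\de}$ gain in Lemma~\ref{lem:N0}. You instead keep the indicator as a constraint on the frequencies: the resonance identity forces $1\le |n_{23}|\le M/2$ throughout, and on the portion where the multiplier is genuinely large it additionally forces $n|n_{23}|\lec M$, so the factor $M$ (respectively $M^{3/4}$) comes from counting lattice points in the constrained region. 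This is more elementary and even slightly sharper on the large-multiplier piece, at the cost of a case split on the size of $m_1$ and a two-dimensional counting argument that the paper avoids. Two small points to tighten: (a) your preliminary assertion that $n|n_{23}|\le |\Phi |/2$ holds ``in the first case'' ($n_2\ge 0$) is false when $0<|n_{13}|<n$; fortunately you never use it in that generality, since the actual argument for $\Sc{N}_R^{(2)}$ rederives $n|n_{23}|\lec M$ there from the extra hypothesis $|m_1|>C_0\Rightarrow n_2<n/2\Rightarrow |n_{13}|>n/2$. (b) After Cauchy--Schwarz the double sum $\sum_n\sum_k\min (n,|k|)^2\LR{n-k}^{2s}|\om (n-k)|^2\chf{n|k|\lec M}$ must be reordered in the variable $m=n-k$, so the uniform bound you need is $\sup_m\sum_k\min (m+k,|k|)^2\chf{(m+k)|k|\lec M}\lec M^{3/2}$ rather than the fixed-$n$ bound you state; it holds by the same dyadic splitting (either $\min =n\le M^{1/2}$ with $\lec n$ admissible $k$ per dyadic block, or $\min =|k|\le M^{1/2}$), so the conclusion stands.
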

\begin{proof}
First of all, we claim that
\eq{claim:Phi}{|\Phi |\gec \begin{cases}
|n_{12}||n_{23}| &\text{if $|n_2|\ge |n_3|$},\\
|n_{13}||n_{23}| &\text{if $|n_2|<|n_3|$}
\end{cases}}
whenever $n=n_{123}$ and $\tilde{m}_1(n,n_1,n_2,n_3)\neq 0$.
In fact, it holds that $n_1>n>0$ and $-n_1<n_{23}<0$, and we have
\eq{id:Phi}{\Phi &=n^2-n_1^2-n_2|n_2|-n_3|n_3|\\
&=\begin{cases}
n^2-n_1^2-n_2^2+n_3^2=2n_{13}n_{23} &\text{if $n_2\ge 0$ and $n_3<0$},\\
n^2-n_1^2+n_2^2-n_3^2=2n_{12}n_{23} &\text{if $n_2<0$ and $n_3\ge 0$},\\
n^2-n_1^2+n_2^2+n_3^2=2(nn_{23}-n_2n_3) &\text{if $n_2,n_3<0$}.\end{cases}}
In the first two cases the claim follows from $n_{23}<0$.
In the third case, we see that both $nn_{23}$ and $-n_2n_3$ are negative and $|\Phi|\gec |n||n_{23}|\vee |n_2||n_3|$.
Let us assume $|n_2|\ge |n_3|$ by symmetry.
The claim follows if $|n|\gec |n_{12}|$, while if $|n|\ll |n_{12}|$ we have $|n_{12}|\sim |n_3|$ and $|n_{23}|\sim |n_2|$, which implies \eqref{claim:Phi}.

We observe that \eqref{claim:Phi} yields the bound
\eqq{\bigg| \frac{\LR{n}^s\tilde{m}_1(n,n_1,n_2,n_3)}{\Phi}\bigg| \lec \frac{\chf{n_{12}n_{13}\neq 0}\LR{n_1}^s}{\LR{n_2}(|n_{12}|\wedge |n_{13}|)},}
and the desired estimates are reduced to showing
\eq{est1}{\norm{\sum _{n=n_{123}}\frac{\LR{n_1}^s\om _1(n_1)\om _2(n_2)\om _3(n_3)}{\LR{n_2}[\LR{n_{12}}\wedge \LR{n_{13}}]}}{\ell ^2}\lec \tnorm{\om _1}{\ell ^2_s}\tnorm{\om _2}{\ell ^2_s}\tnorm{\om _3}{\ell ^2_s}}
for $\om _1,\om _2,\om _3\in \ell ^2_s$.
If $|n_{12}|\ge |n_{13}|$, then by Young's inequality we have
\eqq{\text{LHS of \eqref{est1}}&\lec \norm{\Big( \frac{\om _2}{\LR{\cdot}}\Big) * \Big( \frac{\big( \LR{\cdot}^s\om _1\big) *\om _3}{\LR{\cdot}}\Big)}{\ell ^2}\le \tnorm{\om _2}{\ell ^1_{-1}}\norm{\big( \LR{\cdot}^s\om _1\big) *\om _3}{\ell ^2_{-1}}\\
&\le \tnorm{\om _2}{\ell ^2}\norm{\big( \LR{\cdot}^s\om _1\big) *\om _3}{\ell ^\I}\le \tnorm{\om _1}{\ell ^2_s}\tnorm{\om _2}{\ell ^2}\tnorm{\om _3}{\ell ^2}.}
If $|n_{12}|<|n_{13}|$, then we have
\eqq{\text{LHS of \eqref{est1}}&\lec \norm{\Big( \frac{\big( \LR{\cdot}^s\om _1\big) *\big( \LR{\cdot}^{-1}\om _2\big)}{\LR{\cdot}}\Big) *\om _3}{\ell ^2}\\
&\le \norm{\big( \LR{\cdot}^s\om _1\big) *\big( \LR{\cdot}^{-1}\om _2\big)}{\ell ^1_{-1}}\tnorm{\om _3}{\ell ^2}\\
&\le \tnorm{\LR{\cdot}^s\om _1}{\ell ^2}\tnorm{\LR{\cdot}^{-1}\om _2}{\ell ^1}\tnorm{\om _3}{\ell ^2}\le \tnorm{\om _1}{\ell ^2_s}\tnorm{\om _2}{\ell ^2}\tnorm{\om _3}{\ell ^2}.\qedhere}
\end{proof}

The remaining term $\Sc{N}_{N\!R}[u]$, which can be controlled in $\ell ^2_s$ if $s>3/2$, does not seem to admit an $\ell ^2_s$ estimate for lower regularities.
Therefore, we apply a differentiation by parts, noting that $\Phi$ remains non-zero due to \eqref{claim:Phi}:
\eqq{&\Sc{N}_{N\!R}[\om ](n)\\
&=-i\sum _{\mat{n=n_{123}\\ |\Phi |>M}}\tilde{m}_1(n,n_1,n_2,n_3)\bigg[ \p _t\Big( \frac{e^{it\Phi}}{\Phi}\om (n_1)\om (n_2)\om ^*(n_3)\Big) -\frac{e^{it\Phi}}{\Phi}\p _t\Big( \om (n_1)\om (n_2)\om ^*(n_3)\Big) \bigg] \\
&=:\p _t\Sc{N}_0[\om ](n)+\Sc{N}_1[\om ](n)+\Sc{N}_2[\om ](n)+\Sc{N}_3[\om ](n)+\Sc{R}_1[u,\om ](n),}
where we change the order of summation and time differentiation and apply the product rule, and then substitute the equation \eqref{eq_om} to obtain the expression of each term as
\eqq{\Sc{N}_0[\om ](n)&:=-i\sum _{\mat{n=n_{123}\\ |\Phi |>M}}\frac{e^{it\Phi}\tilde{m}_1(n,n_1,n_2,n_3)}{\Phi}\om (n_1)\om (n_2)\om ^*(n_3),\\
\Sc{N}_1[\om ](n)&:=i\sum _{\mat{n=n_{123}\\ |\Phi |>M}}\frac{e^{it\Phi}\tilde{m}_1(n,n_1,n_2,n_3)}{\Phi}\Sc{N}[\om ](n_1)\om (n_2)\om ^*(n_3),\\
\Sc{N}_2[\om ](n)&:=i\sum _{\mat{n=n_{123}\\ |\Phi |>M}}\frac{e^{it\Phi}\tilde{m}_1(n,n_1,n_2,n_3)}{\Phi}\om (n_1)\Sc{N}[\om ](n_2)\om ^*(n_3),\\
\Sc{N}_3[\om ](n)&:=i\sum _{\mat{n=n_{123}\\ |\Phi |>M}}\frac{e^{it\Phi}\tilde{m}_1(n,n_1,n_2,n_3)}{\Phi}\om (n_1)\om (n_2)\bbar{\Sc{N}[\om ](-n_3)},}
and
\eqq{\Sc{R}_1[u,\om ](n):=&i\sum _{\mat{n=n_{123}\\ |\Phi |>M}}\frac{e^{it\Phi}\tilde{m}_1(n,n_1,n_2,n_3)}{\Phi}\Big[ \Sc{R}[u,\om ](n_1)\om (n_2)\om ^*(n_3)\\
&\qquad +\om (n_1)\Sc{R}[u,\om ](n_2)\om ^*(n_3)+\om (n_1)\om (n_2)\bbar{\Sc{R}[u,\om ](-n_3)}\Big] .
}

We can justify the above formal computations as follows.
The change of summation and time differentiation in $\Sc{N}_0$ is justified in the framework of distributions (see, e.g., \cite[Lemma~5.1]{GKO13}), since the sum is absolutely convergent in view of \eqref{est:dtom}.
Application of the product rule also makes sense because $\om (\cdot ,n)\in C^1$ for each $n$ if $s\ge 0$. 
Absolute convergence of all the sums in $\Sc{N}_1$, $\Sc{N}_2$, and $\Sc{N}_3$, which justifies substitution of the equation \eqref{eq_om}, will be proved at the end of this subsection.

The estimate for $\Sc{R}_1$ is immediately obtained as a corollary of Lemma~\ref{lem:N_R}.
\begin{lem}\label{lem:R1}
Let $0\le s\le 1$.
Then, we have
\eqq{&\norm{\Sc{R}_1[u,\om ]}{C_T\ell ^2_s}\lec \Big( 1+\tnorm{u}{C_TH^s}^4+\tnorm{\om}{C_T\ell ^2_s}^3\Big) \tnorm{\om}{C_T\ell ^2_s}^2,\\
&\norm{\Sc{R}_1[u,\om ]-\Sc{R}_1[\ti{u},\ti{\om}]}{C_T\ell ^2_s}\\
&\hx \lec \Big( 1+\tnorm{u}{C_TH^s}^4+\tnorm{\ti{u}}{C_TH^s}^4\Big) \Big( \tnorm{\om}{C_T\ell ^2_s}^2+\tnorm{\ti{\om}}{C_T\ell ^2_s}^2\Big) \tnorm{u-\ti{u}}{C_TH^s}\\
&\hx \hx +\Big( 1+\tnorm{u}{C_TH^s}^4+\tnorm{\ti{u}}{C_TH^s}^4+\tnorm{\om}{C_T\ell ^2_s}^3+\tnorm{\ti{\om}}{C_T\ell ^2_s}^3\Big) \Big( \tnorm{\om}{C_T\ell ^2_s}+\tnorm{\ti{\om}}{C_T\ell ^2_s}\Big) \tnorm{\om -\ti{\om}}{C_T\ell ^2_s}}
for any real-valued mean-zero functions $u,\ti{u}\in C_TH^s$ and any $\om ,\ti{\om}\in C_T\ell ^2_s$.
\end{lem}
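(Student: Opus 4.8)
The plan is to reduce everything to the two estimates already proved, namely the bilinear/trilinear bound from Lemma~\ref{lem:N_R} applied to the multiplier $\tilde m_1/\Phi$ and the bound for $\Sc R$ from Lemma~\ref{lem:R}. The point is that $\Sc R_1$ has exactly the same multilinear structure as $\Sc N_R$ in Lemma~\ref{lem:N_R}, the only difference being that one of the three factors $\om(n_1),\om(n_2),\om^*(n_3)$ is replaced by $\Sc R[u,\om](n_1)$ (resp.\ $\Sc R[u,\om](n_2)$, $\bbar{\Sc R[u,\om](-n_3)}$). Since the key multiplier estimate \eqref{est1} was proved for arbitrary sequences $\om_1,\om_2,\om_3\in\ell^2_s$, and since $\ell^2_s$ is closed under complex conjugation together with the reflection $n\mapsto -n$, I can apply \eqref{est1} verbatim with, say, $\om_1$ replaced by the $\ell^2_s$-sequence $\{\Sc R[u,\om](n_1)\}_{n_1}$. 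This gives
\[
\norm{\Sc R_1[u,\om]}{C_T\ell^2_s}\lec \norm{\Sc R[u,\om]}{C_T\ell^2_s}\,\norm{\om}{C_T\ell^2_s}^2,
\]
and then invoking the first inequality of Lemma~\ref{lem:R} to bound $\norm{\Sc R[u,\om]}{C_T\ell^2_s}\lec 1+\norm{u}{C_TH^s}^4+\norm{\om}{C_T\ell^2_s}^3$ yields the first claimed estimate. (Strictly speaking, replacing $M$ by $1$ in Lemma~\ref{lem:N_R}'s proof is harmless since we only used $|\Phi|>M\ge$ const; alternatively one absorbs any $M$-dependence into the implicit constant, as $M$ is fixed once the solution is fixed.)

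For the difference estimate I would write $\Sc R_1[u,\om]-\Sc R_1[\ti u,\ti\om]$ as a telescoping sum: first replace $\Sc R[u,\om]$ by $\Sc R[\ti u,\ti\om]$ in the distinguished slot, then successively replace each remaining $\om$-factor by $\ti\om$. Each term is again trilinear with one factor being either a difference $\Sc R[u,\om]-\Sc R[\ti u,\ti\om]$ (or $\om-\ti\om$), and the other two being one of $\om$, $\ti\om$, or $\Sc R[u,\om]$, $\Sc R[\ti u,\ti\om]$. Applying \eqref{est1} to each term and then using the difference estimate of Lemma~\ref{lem:R} for $\norm{\Sc R[u,\om]-\Sc R[\ti u,\ti\om]}{C_T\ell^2_s}$ (which produces the $\norm{u-\ti u}{C_TH^s}$ and $\norm{\om-\ti\om}{C_T\ell^2_s}$ contributions) and the first inequality of Lemma~\ref{lem:R} to control the $\Sc R[\cdot,\cdot]$ factors that are not differenced, one collects precisely the two groups of terms in the stated bound: the first group carrying $\norm{u-\ti u}{C_TH^s}$ with coefficient $(1+\norm{u}{C_TH^s}^4+\norm{\ti u}{C_TH^s}^4)(\norm{\om}{C_T\ell^2_s}^2+\norm{\ti\om}{C_T\ell^2_s}^2)$, and the second carrying $\norm{\om-\ti\om}{C_T\ell^2_s}$ with the corresponding coefficient.

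There is essentially no hard analytic obstacle here; the only thing to be careful about is bookkeeping. One must make sure that when a factor $\om(n_j)$ is replaced by $\Sc R[u,\om](n_j)$ the hypotheses of \eqref{est1} are genuinely met — i.e.\ that $\Sc R[u,\om]\in C_T\ell^2_s$, which is exactly the content of Lemma~\ref{lem:R} — and that the conjugation/reflection appearing in the $n_3$-slot does not spoil any of the estimates (it does not, since all norms involved, $\ell^2_s$ and $\ell^1_{-1}$, are invariant under $n\mapsto -n$ and complex conjugation, and the bound on $\tilde m_1/\Phi$ in Lemma~\ref{lem:N_R} depends only on $|n|,|n_j|,|n_{ij}|$). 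The absolute convergence needed to justify writing $\Sc R_1$ as an honest multilinear sum is subsumed in the a posteriori $\ell^2_s$-bound, exactly as for $\Sc N_1,\Sc N_2,\Sc N_3$. Hence the lemma follows as an immediate corollary of Lemmas~\ref{lem:R} and~\ref{lem:N_R}, as claimed.
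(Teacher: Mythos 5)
Your proposal is correct and is essentially the paper's own proof, which is stated there in one line as "a combination of Lemma~\ref{lem:R} and the proof of Lemma~\ref{lem:N_R}": one bounds the multiplier $\tilde m_1/\Phi$ by $\LR{n_1}^s/(\LR{n_2}[\LR{n_{12}}\wedge\LR{n_{13}}])$ via \eqref{claim:Phi}, applies \eqref{est1} with one slot occupied by $\Sc{R}[u,\om]$ (which lies in $\ell^2_s$ by Lemma~\ref{lem:R}), and telescopes for the difference bound. One small clarification: no factor of $M$ arises here at all, since the $M$ in Lemma~\ref{lem:N_R} comes from reinstating $|\Phi|\le M$ after dividing by $\Phi$, whereas $\Sc{R}_1$ genuinely carries the multiplier $\tilde m_1/\Phi$ — so there is nothing to "absorb into the constant."
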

\begin{proof}
This is just a combination of Lemma~\ref{lem:R} and the proof of Lemma~\ref{lem:N_R}.
\end{proof}

Due to the time derivative on $\Sc{N}_0$, the estimate for it requires an extra small factor.
We assume $s>0$ so that a negative power of $M$ can be included in the estimate.
\begin{lem}\label{lem:N0}
Let $s>0$.
Then, there exists $\de >0$ such that we have
\eqq{\norm{\Sc{N}_0[\om ]}{C_T\ell ^2_s}&\lec M^{-\de}\tnorm{\om}{C_T\ell ^2_s}^3,\\
\norm{\Sc{N}_0[\om ]-\Sc{N}_0[\ti{\om}]}{C_T\ell ^2_s}&\lec M^{-\de}\Big( \tnorm{\om}{C_T\ell ^2_s}^2+\tnorm{\ti{\om}}{C_T\ell ^2_s}^2\Big) \tnorm{\om -\ti{\om}}{C_T\ell ^2_s}}
for any $\om ,\ti{\om}\in C_T\ell ^2_s$.
\end{lem}
\begin{proof}
The claim is verified by modifying the proof of Lemma~\ref{lem:N_R}.
We divide the sum in \eqref{est1} into two parts as follows.

If $\LR{n_2}[\LR{n_{12}}\wedge \LR{n_{13}}]\gec \max _{1\le j\le 3}\LR{n_j}$, then it suffices to observe that \eqref{est1} still holds if the denominator in the left-hand side is replaced with $\LR{n_2}^{(1/2)+}[\LR{n_{12}}\wedge \LR{n_{13}}]^{(1/2)+}$, and that $\LR{n_2}[\LR{n_{12}}\wedge \LR{n_{13}}]\gec |\Phi |^{1/2}>M^{1/2}$.

Suppose that $\LR{n_2}[\LR{n_{12}}\wedge \LR{n_{13}}]\ll \max _{1\le j\le 3}\LR{n_j}$.
This in particular implies $\LR{n_2}\ll \LR{n_1}\vee \LR{n_3}$.
If $\LR{n_1}\gg \LR{n_3}$, then $\LR{n_1}\sim \LR{n_{12}}\sim \LR{n_{13}}\ll \max _{1\le j\le 3}\LR{n_j}=\LR{n_1}$ --- a contradiction.
Hence, it must hold $\LR{n_3}\sim \max _{1\le j\le 3}\LR{n_j}\gec |\Phi |^{1/2}>M^{1/2}$ and then $\tnorm{\om _3}{\ell ^2}\lec M^{-s/2}\tnorm{\om _3}{\ell ^2_s}$ if $s>0$.
\end{proof}

The remaining terms $\Sc{N}_1$, $\Sc{N}_2$, $\Sc{N}_3$ will be treated in the subsequent subsections.
Finally, let us see that the sums in these terms are absolutely convergent.
This is the first step where the regularity threshold $s=1/6$ appears.
\begin{lem}\label{lem:weak-Nj}
Let $1/6\le s<1/2$ and $\al <3s-1$.
Then, we have
\eqq{\norm{\Sc{N}_j[\om ]}{C_T\ell ^2_{\al}}\lec \tnorm{\om}{C_T\ell ^2_s}^5}
for $j=1,2,3$.
\end{lem}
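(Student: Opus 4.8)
The plan is to estimate each $\Sc{N}_j$ ($j=1,2,3$) by first inserting the definition of $\Sc{N}[\om]$, so that $\Sc{N}_j[\om](n)$ becomes a sum over six frequencies $n_1,\dots,n_6$ with $n=n_{123}$ and (say for $j=1$) $n_1=n_{456}$. The multiplier is a product of two factors: the outer factor $\ti m_1(n,n_1,n_2,n_3)/\Phi$, which by the proof of Lemma~\ref{lem:N_R} (specifically \eqref{claim:Phi}) is bounded by $\LR{n}^{-1}\LR{n_1}/\big(\LR{n_2}[\LR{n_{12}}\wedge\LR{n_{13}}]\big)$ up to a negligible loss, and the inner multiplier $\ti m_1$ or $m_2$ or $m_3$ associated with $\Sc{N}[\om](n_j)$, which by \eqref{bd:multiplier} is bounded by $\LR{n_1}/\min_{4\le k\le 6}\LR{n_k}$ (with the inner phase factor $e^{it\Phi_{\text{inner}}}$ playing no role for an absolute estimate). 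Combining, I would bound the outer multiplier crudely by $\LR{n}^{-1}\LR{n_1}$ (discarding the helpful $\LR{n_2}$ and $\LR{n_{12}}\wedge\LR{n_{13}}$ in the denominator, since here we only need \emph{some} negative-regularity estimate, not a sharp one), and absorb a factor $\LR{n_1}$: in total the effective multiplier is $\lesssim \LR{n}^{-1}\LR{n_1}^2/\min_{4\le k\le6}\LR{n_k}$.

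The target is $\ell^2_{\al}$ with $\al<3s-1$, i.e.\ I must gain back $\LR{n}^{\al}\lesssim \LR{n}^{3s-1-}$ worth of derivatives while distributing $\LR{\cdot}^s$ weights onto all five inputs $\om(n_2),\om^*(n_3),\om(n_4),\om(n_5),\om^*(n_6)$. Since $\al+1<3s$ and $\LR{n}\lesssim\max(\LR{n_2},\LR{n_3},\LR{n_4},\LR{n_5},\LR{n_6})$ (using $n=n_{2}+n_{3}+n_{456}=n_2+n_3+n_4+n_5+n_6$), it is enough to show
\begin{equation*}
\norm{\sum_{n=n_{23}+n_{456}}\frac{\LR{n_1}^2}{\min\limits_{4\le k\le6}\LR{n_k}}\,\prod_{i\in\{2,3\}}\om_i(n_i)\prod_{k\in\{4,5,6\}}\om_k(n_k)}{\ell^2}\lesssim \prod_{i}\norm{\om_i}{\ell^2_s}\prod_k\norm{\om_k}{\ell^2_s},
\end{equation*}
after placing $s$-weights on all inputs and recalling $n_1=n_{456}$ so $\LR{n_1}^2\lesssim\LR{n_4}^2+\LR{n_5}^2+\LR{n_6}^2$, say $\LR{n_1}^2\lesssim\LR{n_4}^{2}$ when $n_4$ is the largest among $n_4,n_5,n_6$ (the other cases are symmetric in $n_5,n_6$ up to the conjugation). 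The factor $\LR{n_4}^{2}$ must be controlled by $\LR{n_4}^s\cdot\LR{n_4}^{2-s}$; the surplus $\LR{n_4}^{2-s}$ together with the gain $\LR{n}^{-\al}\gtrsim\LR{n}^{1-3s+}$ from the left side, and $1/\min\LR{n_k}\le\LR{n_5}^{-1/2}\LR{n_6}^{-1/2}$, has to be dominated using the remaining $s$-weights. The arithmetic that makes this close is exactly $6s-2>\al+1$ vs $3s$, i.e.\ $s\ge1/6$; concretely one checks that $(2-s)+(-\al)<1$ follows from $\al>2-s-1=1-s$... no: one needs the homogeneity count $2-s-\al-\tfrac12-\tfrac12\le s$, i.e.\ $\al\ge1-2s$, which combined with $\al<3s-1$ forces $1-2s<3s-1$, i.e.\ $s>2/5$... so a cruder distribution is needed. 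In practice I will instead keep the full outer denominator $\LR{n_2}[\LR{n_{12}}\wedge\LR{n_{13}}]$ from \eqref{claim:Phi}: this returns the extra decay that upgrades the count, and the computation becomes parallel to \eqref{est1} with one input ($\om_1$, now with weight $\LR{n_1}^{1+s}$ rather than $\LR{n_1}^s$, the extra $\LR{n_1}^1$ coming from the inner multiplier's numerator) replaced by a convolution $\om_4*\om_5*\om_6^*$ of three $\ell^2_s$ sequences, for which the relevant estimate is $\norm{\om_4*\om_5*\om_6}{\ell^2_{s-1/2-}}\lesssim\prod\norm{\om_k}{\ell^2_s}$ — a Young/Cauchy–Schwarz inequality valid for $s\ge1/6$ since $s-1/2-\ge -s+(\text{something})$... this is the fractional-Leibniz/convolution bound $\ell^2_s*\ell^2_s*\ell^2_s\hookrightarrow\ell^\I$ when $s>1/6$, losing only $1/2-$.

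The key steps in order: (1) substitute $\Sc{N}[\om]$ into $\Sc{N}_j$, yielding a six-linear sum with frequency relation $n=n_{123}$ and $n_j=n_{456}$; (2) bound the outer multiplier via \eqref{claim:Phi} as in Lemma~\ref{lem:N_R} and the inner multiplier via \eqref{bd:multiplier}; (3) reduce, after a case split on which of $n_4,n_5,n_6$ carries the largest modulus and which of $\LR{n_{12}},\LR{n_{13}}$ is smaller (exactly the dichotomy in the proof of Lemma~\ref{lem:N_R}), to a scalar multilinear $\ell^2$ estimate; (4) apply Young's inequality and the three-fold convolution bound $\ell^2_s\times\ell^2_s\times\ell^2_s\to\ell^\I$ (with $1/2-$ loss, valid for $s>1/6$), distributing $s$-weights by $\LR{n}^{\al}\lesssim\max\LR{n_i}^{\al+1}$ and $\al+1<3s$; the $j=2,3$ cases differ only in the placement of conjugations and the choice of inner multiplier ($m_2$ or $m_3$), which changes nothing in the absolute-value estimates. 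The main obstacle is step~(4): arranging the weights so that the numerator $\LR{n_1}\cdot\LR{n_1}=\LR{n_{456}}^2$ from the two multipliers' numerators is absorbed — one power going into the $\ell^2_s\to\ell^1_{-1}$ or $\ell^2\to\ell^\I$ convolution estimate for the inner triple and the other being the price paid against the outer denominator $[\LR{n_{12}}\wedge\LR{n_{13}}]$ — while still having $\al$ strictly below $3s-1$; this is precisely where $s\ge1/6$ is forced, since the convolution $\ell^2_s*\ell^2_s*\ell^2_s\hookrightarrow\ell^\I$ degenerates at $s=1/6$.
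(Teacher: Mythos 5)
Your overall strategy---substitute $\Sc{N}[\om]$ into $\Sc{N}_j$, bound the inner multiplier by \eqref{bd:multiplier} and the outer one via \eqref{claim:Phi}, and reduce to a multilinear $\ell^2$ estimate---matches the paper's, but the quantitative core of your argument has a genuine gap. The convolution bound you ultimately rely on, $\norm{\om_4*\om_5*\om_6}{\ell^2_{s-1/2-}}\lec\prod_k\norm{\om_k}{\ell^2_s}$ for three $\ell^2_s$ sequences, is false in the relevant range $1/6\le s<1/4$: the sharp trilinear product law only reaches $\ell^2_{3s-1}$, and $s-1/2>3s-1$ there. (Test it on $\hhat{\om}_k=N^{-s-1/2}\chf{[N,2N]}$: the convolution is of size $N^{1/2-3s}$ on an interval of length $\sim N$, so its $\ell^2_{s-1/2}$ norm is $\sim N^{1/2-2s}\to\infty$ while each input has unit $\ell^2_s$ norm.) What is actually true---and what the paper isolates as \eqref{est:dtom2}---is that the \emph{full} inner nonlinearity, multiplier included, satisfies $\norm{\Sc{N}[\om]}{\ell^2_{\be}}\lec\norm{\om}{\ell^2_s}^3$ with $\be=2s-3/2$, the factor $\min_k\LR{n_k}^{-1}$ upgrading one input to $H^{s+1}$ before the numerator $\LR{n_1}$ costs a full derivative.

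Even granting a corrected inner estimate, the remaining outer estimate is not ``parallel to \eqref{est1}'': in \eqref{est1} all three inputs sit at regularity $s$ and the target is $\ell^2_s$, whereas here the $\om_1$ slot carries an input at regularity $\be=2s-3/2<-1/2$ and the target is $\ell^2_\al$ with $\al<3s-1$, so the exponent bookkeeping is entirely different. The paper must run a four-case analysis on the relative sizes of $\LR{n_1},\LR{n_2},\LR{n_3}$; the constraint $\al<3s-1$ is forced in the cases $\LR{n_1}\gg\LR{n_2},\LR{n_3}$ (subcase $\LR{n_2}\ll\LR{n_3}$) and $\LR{n_1}\sim\LR{n_3}\gg\LR{n_2}$, and the hypothesis $s\ge1/6$ enters precisely in the latter, ``worst'' interaction through $-\be-s-1\le 0$. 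Neither case has an analogue in the proof of \eqref{est1}. Your proposal never carries out this analysis: your first weight distribution is, as you yourself note, off by a wide margin (it would require $s>2/5$), and your fallback rests on the false convolution lemma above, so the proof does not close.
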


\begin{proof}
We first upgrade \eqref{est:dtom} as follows: for $0<s<1/2$, it holds
\eq{est:dtom2}{\norm{\Sc{N}[\om ]}{C_T\ell ^2_{\be}}\lec \tnorm{\om}{C_T\ell ^2_s}^3,\qquad \be :=2s-\frac{3}{2}\in (-\frac{3}{2},-\frac{1}{2}).}
By \eqref{bd:multiplier}, the estimate \eqref{est:dtom2} is reduced to showing
\[ \tnorm{fgh}{H^{\be +1}}\lec \tnorm{f}{H^s}\tnorm{g}{H^s}\tnorm{h}{H^{s+1}}.\]
This is shown by applying the Sobolev multiplication law (see \mbox{e.g.}  \cite[Lemma~3.4]{GLM14}) twice:
\[ \tnorm{fgh}{H^{2s-\frac{1}{2}}}\lec \tnorm{f}{H^s}\tnorm{gh}{H^s}\lec \tnorm{f}{H^s}\tnorm{g}{H^s}\tnorm{h}{H^{s+1}},\]
where in the first inequality we have used the assumption $0<s<1/2$.

Let $0<s<1/2$.
In view of \eqref{est:dtom2} it suffices to show
\eqq{\norm{\sum _{n=n_{123}}\frac{\tilde{m}_1(n,n_1,n_2,n_3)\LR{n}^\al n_{\max}^{s-\be}}{\Phi \LR{n_1}^s\LR{n_2}^s\LR{n_3}^s}\om _1(n_1)\om _2(n_2)\om _3(n_3)}{\ell ^2}\lec \tnorm{\om _1}{\ell ^2}\tnorm{\om _2}{\ell ^2}\tnorm{\om _3}{\ell ^2},}
where $n_{\max}:=\max _{1\le j\le 3}\LR{n_j}$.
From the definition of $\tilde{m}_1$ and \eqref{claim:Phi} we have
\eq{est:mPhi}{\Big| \frac{\tilde{m}_1(n,n_1,n_2,n_3)\LR{n}^\al n_{\max}^{s-\be}}{\Phi \LR{n_1}^s\LR{n_2}^s\LR{n_3}^s}\Big| \lec \frac{\LR{n}^{1+\al}n_{\max}^{s-\be}}{\LR{n_1}^{1+s}\LR{n_2}^{1+s}\LR{n_3}^s[\LR{n_{12}}\wedge \LR{n_{13}}]},}
and the following four cases are possible to occur.

(i) $\LR{n_1}\gg \LR{n_2},\LR{n_3}$.
In this case we have $\LR{n}\sim \LR{n_1}\sim [\LR{n_{12}}\wedge \LR{n_{13}}]\sim n_{\max}$ and 
\eqq{\eqref{est:mPhi}\sim \frac{\LR{n_1}^{\al -\be -1}}{\LR{n_2}^{1+s}\LR{n_3}^s}\lec \begin{cases}
\dfrac{\LR{n_1}^{\al -\be -1}}{\LR{n_2}^{s+1/2}\LR{n_3}^{s+1/2}} &\text{if $\LR{n_2}\gec \LR{n_3}$},\\[15pt]
\dfrac{\LR{n_1}^{\al -\be -s-(1/2)+}}{\LR{n_2}^{1+s}\LR{n_3}^{(1/2)+}} &\text{if $\LR{n_2}\ll \LR{n_3}$.} \end{cases}
}
Hence, by Young's inequality, this case is handled whenever 
\eqq{\al <\left\{ \begin{array}{lll}
\be +1 &=2s-1/2 ~(>3s-1)&\hx \text{if $\LR{n_2}\gec \LR{n_3}$},\\
\be +s+1/2 &=3s-1 &\hx \text{if $\LR{n_2}\ll \LR{n_3}$}.\end{array}\right.}

(ii) $\LR{n_1}\sim \LR{n_2}\gec \LR{n_3}$.
Since $\LR{n}\le \LR{n_1}$ whenever $\tilde{m}_1\neq 0$, we have $\LR{n}=\LR{n_3+n_{12}}\le \LR{n_3}+\LR{n_{12}}$ and $\LR{n}\le \LR{n_3-n_{13}}\le \LR{n_3}+\LR{n_{13}}$, which imply $\LR{n}\le \LR{n_3}+[\LR{n_{12}}\wedge \LR{n_{13}}]$.
Hence, we have
\eqq{\eqref{est:mPhi}&\sim \frac{\LR{n}^{1+\al}n_1^{-\be -s-2}}{\LR{n_3}^s[\LR{n_{12}}\wedge \LR{n_{13}}]}\lec \frac{\LR{n}^{1+\al}n_1^{-\be -s-2+}}{\LR{n_3}^{s}[\LR{n_{12}}\wedge \LR{n_{13}}]^{1+}}\\
&\lec \frac{\LR{n}^{1+\al -s}n_1^{-\be -s-2+}}{[\LR{n_{12}}\wedge \LR{n_{13}}]^{1+}}+\frac{\LR{n}^{1+\al -s}n_1^{-\be -s-2+}}{\LR{n_3}^{s}[\LR{n_{12}}\wedge \LR{n_{13}}]^{1-s+}}.}
This yields the desired estimate if $-\be -s-2<0$ and $\al -\be -2s-1<0$, which is possible if $\al <4s-1/2~(>3s-1)$.

(iii) $\LR{n_1}\sim \LR{n_3}\gg \LR{n_2}$.
The worst interaction occurs in this case. 
Since $\LR{n}\le \LR{n_2}+\LR{n_{13}}$, we see that 
\eqq{\eqref{est:mPhi}\sim \frac{\LR{n}^{1+\al}n_1^{-\be -s-1}}{\LR{n_2}^{1+s}\LR{n_{13}}}\lec \begin{cases}
\dfrac{\LR{n}^{\al -s+}n_1^{-\be -s-1}}{\LR{n_2}^{0+}\LR{n_{13}}} &\text{if $\LR{n}\lec \LR{n_2}$},\\[15pt]
\dfrac{\LR{n}^{(1/2)+\al +}n_1^{-\be -s-1}}{\LR{n_2}^{1+s}\LR{n_{13}}^{(1/2)+}} &\text{if $\LR{n}\gg \LR{n_2}$}.\end{cases}}
This yields the desired estimate if $-\be -s-1\le 0$ (i.e., $s\ge 1/6$) and 
\eqq{\al <\left\{ \begin{array}{lll}
\be +2s+1 &=4s-1/2 &\hx \text{if $\LR{n}\lec \LR{n_2}$},\\
\be +s+1/2 &=3s-1 &\hx \text{if $\LR{n}\gg \LR{n_2}$}.\end{array}\right.}

(iv) $\LR{n_2}\sim \LR{n_3}\gg \LR{n_1}$.
Recalling $\LR{n}\le \LR{n_1}$, we have
\eqq{\eqref{est:mPhi}\sim \frac{\LR{n}^{1+\al}n_2^{-\be -s-2}}{\LR{n_1}^{1+s}}\lec \frac{\LR{n}^{1+\al -s+}n_2^{-\be -s-2}}{\LR{n}^{(1/2)+}\LR{n_1}^{(1/2)+}}.}
Therefore, the claim follows if $-\be -s-2\le 0$ and $\al -\be -2s-1<0$, which is possible if $\al <4s-1/2$.
\end{proof}
\begin{rem}\label{rem1}
From the above proof, we see that the condition on $\al$ is weakened to $\al <4s-1/2$ if the sum is restricted to frequencies such that $\LR{n_2}\gec \LR{n}$.
In particular, $\Sc{N}_j[\om ]$ is controlled in $\ell ^2_s$ for $1/6<s<1/2$ in this frequency range, since we can choose $\al =s$ if $s>1/6$.
\end{rem}

%%%%%%%%%%%%%%%%%%%%%%%%%%%%%%%%%%%%%%%%%%

\subsection{Estimate for $\Sc{N}_1$}

By the positivity of $n_1$, $\Sc{N}_1[\om ]$ can be written as
\eqq{i\sum _{\mat{n=n_{123}\\ |\Phi |>M}}\sum _{n_1=n_{456}}\Big[ \frac{e^{it(\Phi +\Phi _1)}\tilde{m}_1(n,n_1,n_2,n_3)\tilde{m}_1(n_1,n_4,n_5,n_6)}{\Phi}\om (n_2)\om ^*(n_3)\om (n_4)\om (n_5)\om ^*(n_6)\Big] ,}
where $\Phi _1:=n_1|n_1|-n_4|n_4|-n_5|n_5|-n_6|n_6|$.
We decompose $\Sc{N}_1$ as
\eqq{\Sc{N}_1[\om ](n)&=i\sum _{\mat{n=n_{123}\\ |\Phi |>M}}\sum _{n_1=n_{456}}\Big( \chf{\Sc{A}_1^c}+\chf{\Sc{A}_1}\Big) \Big[ \cdots \Big] =:\Sc{N}_{1,R}[\om ](n)+\Sc{N}_{1,N\!R}[\om ](n),}
where $\Sc{A}_1$, a subset of $\Shugo{(n,n_1,\dots ,n_6)\in \Bo{Z}^7}{n=n_{123},\,n_1=n_{456}}$, is defined by
\eqq{\Sc{A}_1:= &\{ \,(n,n_j):\LR{n}\gg \LR{n_2}\gec \LR{n_3}~~\text{and}~~\LR{n_5}\ll \LR{n_1}\wedge \LR{n_6}~~\text{and}~~\LR{n_2}\ll \LR{n_6}\} \\
&\cup \{ \,(n,n_j):\LR{n_2}\ll \LR{n},\LR{n_3}~~\text{and}~~\LR{n_5}\ll \LR{n_1}\wedge \LR{n_6} \} .}

We show that $\Sc{N}_{1,R}$ is controlled in $\ell ^2_s$, $s>1/6$.
\begin{lem}\label{lem:N1R}
Let $1/6<s<1/2$.
Then, we have
\eqq{\norm{\Sc{N}_{1,R}[\om ]}{C_T\ell ^2_s}&\lec \tnorm{\om}{C_T\ell ^2_s}^5,\\
\norm{\Sc{N}_{1,R}[\om ]-\Sc{N}_{1,R}[\ti{\om}]}{C_T\ell ^2_s}&\lec \Big( \tnorm{\om}{C_T\ell ^2_s}^4+\tnorm{\ti{\om}}{C_T\ell ^2_s}^4\Big) \tnorm{\om -\ti{\om}}{C_T\ell ^2_s}}
for any $\om ,\ti{\om}\in C_T\ell ^2_s$.
\end{lem}

\begin{proof}
In view of Remark~\ref{rem1}, it suffices to deal with the case $\LR{n}\gg \LR{n_2}$.%
\footnote{We easily see that Lemma~\ref{lem:weak-Nj} and the estimate \eqref{est:dtom2} admit the corresponding difference estimates.}
We note that this restriction implies $\LR{n}\LR{n_{23}}\lec |\Phi |$ by virtue of \eqref{claim:Phi}.
According to the definition of $\Sc{A}_1$, consider the following two cases.

i) $\LR{n}\gg \LR{n_2}$ and $\LR{n_5}\gec \LR{n_1}\wedge \LR{n_6}$.
In this case, we have $|\tilde{m}_1(n_1,n_4,n_5,n_6)|\lec 1$.
Applying the Sobolev multiplication law twice, we obtain that
\eqq{&\norm{\sum _{n_1=n_{456}}e^{it\Phi _1}\tilde{m}_1(n_1,n_4,n_5,n_6)\om _4(n_4)\om _5(n_5)\om _6(n_6)}{\ell ^2_{3s-1}}\\
&\lec \tnorm{\om _4*\om _5*\om _6}{\ell ^2_{3s-1}}\lec \tnorm{\om _4}{\ell ^2_s}\tnorm{\om _5*\om _6}{\ell ^2_{2s-1/2}}\lec \tnorm{\om _4}{\ell ^2_s}\tnorm{\om _5}{\ell ^2_s}\tnorm{\om _6}{\ell ^2_s}}
for $1/6<s<1/2$.
Repeating the proof of Lemma~\ref{lem:weak-Nj} with the above estimate instead of \eqref{est:dtom2}, we obtain a weaker condition on $\al$ that $\al <4s-1/2$ in Cases (i) and (iii).
This suffices for the claim since $s<4s-1/2$.

ii) $\LR{n}\gg \LR{n_2}\gec \LR{n_3}\vee \LR{n_6}$.
Using \eqref{claim:Phi} and $\LR{n}\sim [\LR{n_{12}}\wedge \LR{n_{13}}]\sim \LR{n_1}<\LR{n_4}$, we see that
\eqq{&\Big| \frac{\LR{n}^s\tilde{m}_1(n,n_1,n_2,n_3)\tilde{m}_1(n_1,n_4,n_5,n_6)}{\Phi}\Big| \\&\lec \frac{\LR{n_4}^s}{\LR{n_2}\LR{n_5}}\lec \frac{\LR{n_4}^s\LR{n_2}^{(1/6)+}\LR{n_3}^{(1/6)+}\LR{n_6}^{(1/6)+}}{\LR{n_5}\LR{n_2}^{(1/2)+}\LR{n_3}^{(1/2)+}\LR{n_6}^{(1/2)+}},}
which yields the claim if $s>1/6$ by use of Young's inequality.
\end{proof}

For $\Sc{N}_{1,N\!R}$, we apply the normal form reduction argument again.
Let us begin with observing that we have $|\Phi +\Phi _1|\gec |\Phi _1|$ for frequencies included in $\Sc{N}_{1,N\!R}$.
In fact, if $\LR{n}\gg \LR{n_2}\gec \LR{n_3}$, $\LR{n_5}\ll \LR{n_1},\LR{n_6}$, and $\LR{n_2}\ll \LR{n_6}$, then we see from \eqref{id:Phi} and \eqref{claim:Phi} that $|\Phi |\lec |n_1||n_2|\ll |n_1-n_5||n_6|\sim |n_{46}||n_{56}|\lec |\Phi _1|$, whereas if $\LR{n_2}\ll \LR{n},\LR{n_3}$ and $\LR{n_5}\ll \LR{n_1},\LR{n_6}$, we have $n_3,n_6<0$ and $n_{13}=n-n_2>0$, $n_{46}=n_1-n_5>0$, which combined with \eqref{id:Phi} imply that $\Phi <0$ and $\Phi _1<0$, and therefore $|\Phi +\Phi _1|>|\Phi _1|$.

Noting $|\Phi +\Phi _1|\gec |\Phi _1|>0$ by \eqref{claim:Phi}, we take a differentiation by parts as
\eqq{&\Sc{N}_{1,N\!R}[\om ](n)\\
&=\p _t\bigg[ \sum _{\mat{n=n_{123}\\ |\Phi |>M}}\sum _{n_1=n_{456}}\frac{e^{it(\Phi +\Phi _1)}\chf{\Sc{A}_1}\tilde{m}_1(n,n_1,n_2,n_3)\tilde{m}_1(n_1,n_4,n_5,n_6)}{\Phi (\Phi +\Phi _1)}\\[-10pt]
&\hspace{180pt} \times \om (n_2)\om ^*(n_3)\om (n_4)\om (n_5)\om ^*(n_6)\bigg] \\
&\hx -\sum _{\mat{n=n_{123}\\ |\Phi |>M}}\sum _{n_1=n_{456}}\frac{e^{it(\Phi +\Phi _1)}\chf{\Sc{A}_1}\tilde{m}_1(n,n_1,n_2,n_3)\tilde{m}_1(n_1,n_4,n_5,n_6)}{\Phi (\Phi +\Phi _1)}\\[-10pt]
&\hspace{170pt} \times \p _t\Big[ \om (n_2)\om ^*(n_3)\om (n_4)\om (n_5)\om ^*(n_6)\Big] \\
&=:\p _t\Sc{N}_{1,0}[\om ](n)+\Sc{N}_{1,1}[\om ](n).}
The above formal computations are again justified by the absolute convergence of the sum in $\Sc{N}_{1,0}$, which follows from Lemma~\ref{lem:weak-Nj}.

Now, we give estimates for $\Sc{N}_{1,1}$.
\begin{lem}\label{lem:N11}
Let $1/6<s<1/2$.
Then, we have
\eqq{&\norm{\Sc{N}_{1,1}[\om ]}{C_T\ell ^2_s}\lec \Big( 1+\tnorm{u}{C_TH^s}^4+\tnorm{\om}{C_T\ell ^2_s}^3\Big) \tnorm{\om}{C_T\ell ^2_s}^4,\\
&\norm{\Sc{N}_{1,1}[\om ]-\Sc{N}_{1,1}[\ti{\om}]}{C_T\ell ^2_s}\\
&\hx \lec \Big( 1+\tnorm{u}{C_TH^s}^4+\tnorm{\ti{u}}{C_TH^s}^4\Big) \Big( \tnorm{\om}{C_T\ell ^2_s}^4+\tnorm{\ti{\om}}{C_T\ell ^2_s}^4\Big) \tnorm{u-\ti{u}}{C_TH^s}\\
&\hx \hx +\Big( 1+\tnorm{u}{C_TH^s}^4+\tnorm{\ti{u}}{C_TH^s}^4+\tnorm{\om}{C_T\ell ^2_s}^3+\tnorm{\ti{\om}}{C_T\ell ^2_s}^3\Big) \Big( \tnorm{\om}{C_T\ell ^2_s}^3+\tnorm{\ti{\om}}{C_T\ell ^2_s}^3\Big) \tnorm{\om -\ti{\om}}{C_T\ell ^2_s}}
for any real-valued mean-zero solutions $u,\ti{u}\in C_TH^s$ of \eqref{BO} and the corresponding solutions $\om ,\ti{\om}\in C_T\ell ^2_s$ of \eqref{eq_om} defined by \eqref{utov}, \eqref{vtoom}.
\end{lem}

\begin{proof}
In view of Lemma~\ref{lem:R} and \eqref{est:dtom2} it suffices to show that
\eqq{\Big\| \sum _{\mat{n=n_{123}\\ |\Phi |>M}}\sum _{n_1=n_{456}}&\Big| \frac{\chf{\Sc{A}_1}\tilde{m}_1(n,n_1,n_2,n_3)\tilde{m}_1(n_1,n_4,n_5,n_6)n_{\max}^{s-(2s-3/2)}}{\Phi (\Phi +\Phi _1)\LR{n_2}^s\LR{n_3}^s\LR{n_4}^s\LR{n_5}^s\LR{n_6}^s}\\
&\times \om _2(n_2)\om _3(n_3)\om _4(n_4)\om _5(n_5)\om _6(n_6)\Big| \Big\| _{\ell ^2_s}~~\lec~ \prod _{j=2}^6\tnorm{\om _j}{\ell ^2},}
where in the proof of this lemma we denote $\max _{2\le j\le 6}\LR{n_j}$ by $n_{\max}$.

Note that $\LR{n}<\LR{n_1}\sim \LR{n_{46}}\lec \LR{n_4}\sim n_{\max}$, $\LR{n}\LR{n_{23}}\lec |\Phi |$, $\LR{n_1}\LR{n_{56}}\lec |\Phi _1|\lec |\Phi +\Phi _1|$ in the above summation.
Therefore, it holds that
\eqq{&\Big| \frac{\LR{n}^s\chf{\Sc{A}_1}\tilde{m}_1(n,n_1,n_2,n_3)\tilde{m}_1(n_1,n_4,n_5,n_6)n_{max}^{-s+3/2}}{\Phi (\Phi +\Phi _1)\LR{n_2}^s\LR{n_3}^s\LR{n_4}^s\LR{n_5}^s\LR{n_6}^s}\Big| \\
&\lec \frac{\LR{n_4}^{-2s+1/2}}{\LR{n_2}^{s+1}\LR{n_3}^s\LR{n_5}^{s+1}\LR{n_6}^s\LR{n_{46}}^{1-s}}.}
This bound is sufficient in the case $1/4\le s<1/2$, since Young's inequality and twice applications of the Sobolev multiplication law yield
\eqq{&\norm{\frac{\om _2}{\LR{\cdot }^{s+1}}*\frac{\om _5}{\LR{\cdot}^{s+1}}*\frac{\om _3}{\LR{\cdot}^s}*\frac{\om _4*(\om _6/\LR{\cdot}^s)}{\LR{\cdot}^{1-s}}}{\ell ^2}\\
&\le \tnorm{\om _2}{\ell ^1_{-s-1}}\tnorm{\om_5}{\ell ^1_{-s-1}}\norm{\frac{\om _3}{\LR{\cdot}^s}*\frac{\om _4*(\om _6/\LR{\cdot}^s)}{\LR{\cdot}^{1-s}}}{\ell ^2}\\
&\lec \tnorm{\om _2}{\ell ^2}\tnorm{\om _5}{\ell ^2}\tnorm{\om _3}{\ell ^2}\norm{\om _4*\frac{\om_6}{\LR{\cdot}^{s}}}{\ell ^2_{-1/2}}\lec \prod _{j=2}^6\tnorm{\om _j}{\ell ^2}.
}
So we focus on $1/6<s<1/4$ and evaluate as
\eqq{
&\frac{\LR{n_4}^{-2s+1/2}}{\LR{n_2}^{s+1}\LR{n_3}^s\LR{n_5}^{s+1}\LR{n_6}^s\LR{n_{46}}^{1-s}}\lec \frac{\LR{n_6}^{-2s+1/2}+\LR{n_{46}}^{-2s+1/2}}{\LR{n_2}^{s+1}\LR{n_3}^s\LR{n_5}^{s+1}\LR{n_6}^s\LR{n_{46}}^{1-s}}\\
&\lec \frac{1}{\LR{n_2}^{s+1}\LR{n_3}^s\LR{n_5}^{s+1}\LR{n_6}^{3s-1/2}\LR{n_{46}}^{1-s}}+\frac{1}{\LR{n_2}^{s+1}\LR{n_3}^s\LR{n_5}^{s+1}\LR{n_6}^s\LR{n_{46}}^{s+1/2}}.}
The case corresponding to the first term gives the bound:
\eqq{&\tnorm{\om _2}{\ell ^1_{-s-1}}\tnorm{\om _5}{\ell ^1_{-s-1}}\norm{\frac{\om _3}{\LR{\cdot}^s}*\frac{\om _4*(\om _6/\LR{\cdot}^{3s-1/2})}{\LR{\cdot}^{1-s}}}{\ell ^2}\\[-5pt]
&\lec \tnorm{\om _2}{\ell ^2}\tnorm{\om _5}{\ell ^2}\tnorm{\om _3}{\ell ^2}\norm{\om _4*\frac{\om _6}{\LR{\cdot}^{3s-1/2}}}{\ell ^2_{-1/2}}\lec \prod _{j=2}^6\tnorm{\om _j}{\ell ^2},}
while for the second term we argue similarly and obtain the bound:
\eqq{&\tnorm{\om _2}{\ell ^1_{-s-1}}\tnorm{\om _5}{\ell ^1_{-s-1}}\norm{\frac{\om _3}{\LR{\cdot}^s}*\frac{\om _4*(\om _6/\LR{\cdot}^{s})}{\LR{\cdot}^{s+1/2}}}{\ell ^2}\\[-5pt]
&\lec \tnorm{\om _2}{\ell ^2}\tnorm{\om _5}{\ell ^2}\tnorm{\om _3}{\ell ^2}\norm{\om _4*\frac{\om_6}{\LR{\cdot}^{s}}}{\ell ^2_{-2s}}\lec \prod _{j=2}^6\tnorm{\om _j}{\ell ^2},}
where we have used the Sobolev multiplication law twice.
\end{proof}

As a corollary, we also obtain the estimates for $\Sc{N}_{1,0}$.

\begin{lem}\label{lem:N10}
Let $1/6<s<1/2$.
Then, there exists $\de >0$ such that we have
\eqq{\norm{\Sc{N}_{1,0}[\om ]}{C_T\ell ^2_s}&\lec M^{-\de}\tnorm{\om}{C_T\ell ^2_s}^5,\\
\norm{\Sc{N}_{1,0}[\om ]-\Sc{N}_{1,0}[\ti{\om}]}{C_T\ell ^2_s}&\lec M^{-\de}\Big( \tnorm{\om}{C_T\ell ^2_s}^4+\tnorm{\ti{\om}}{C_T\ell ^2_s}^4\Big) \tnorm{\om -\ti{\om}}{C_T\ell ^2_s}}
for any $\om ,\ti{\om}\in C_T\ell ^2_s$.
\end{lem}
\begin{proof}
These estimates follow immediately from the proof of Lemma~\ref{lem:N11}, since $n_{\max}\gec |\Phi |^{1/2}>M^{1/2}$.
\end{proof}

%%%%%%%%%%%%%%%%%%%%%%%%%%%%%%%%%%%%%%%%%%

\subsection{Estimate for $\Sc{N}_2$}

This is an easy case, since we can control this term without further applying normal form reduction.
Recall that
\eqq{
&\Sc{N}_2[\om ](n)\\
&=i\sum _{\mat{n=n_{123}\\ |\Phi |>M}}\frac{e^{it\Phi}\tilde{m}_1(n,n_1,n_2,n_3)}{\Phi}\om (n_1)\bigg[ \sum _{n_2=n_{456}}e^{it\Phi _2}\tilde{m}_1(n_2,n_4,n_5,n_6)\om (n_4)\om (n_5)\om ^*(n_6)\bigg] \om ^*(n_3)\\[-5pt]
&~~ +i\sum _{\mat{n=n_{123}\\ |\Phi |>M}}\frac{e^{it\Phi}\tilde{m}_1(n,n_1,n_2,n_3)}{\Phi}\om (n_1)\bigg[ \sum _{n_2=n_{456}}e^{it\Phi _2}m_2(n_2,n_4,n_5,n_6)\om (n_4)\om ^*(n_5)\om (n_6)\bigg] \om ^*(n_3)\\[-5pt]
&~~ +i\sum _{\mat{n=n_{123}\\ |\Phi |>M}}\frac{e^{it\Phi}\tilde{m}_1(n,n_1,n_2,n_3)}{\Phi}\om (n_1)\bigg[ \sum _{n_2=n_{456}}e^{it\Phi _2}m_3(n_2,n_4,n_5,n_6)\om ^*(n_4)\om (n_5)\om (n_6)\bigg] \om ^*(n_3),}
where $\Phi _2:=n_2|n_2|-n_4|n_4|-n_5|n_5|-n_6|n_6|$.

\begin{lem}\label{lem:N2}
Let $1/6<s<1/2$.
Then, we have
\eqq{\norm{\Sc{N}_{2}[\om ]}{C_T\ell ^2_s}&\lec \tnorm{\om}{C_T\ell ^2_s}^5,\\
\norm{\Sc{N}_{2}[\om ]-\Sc{N}_{2}[\ti{\om}]}{C_T\ell ^2_s}&\lec \Big( \tnorm{\om}{C_T\ell ^2_s}^4+\tnorm{\ti{\om}}{C_T\ell ^2_s}^4\Big) \tnorm{\om -\ti{\om}}{C_T\ell ^2_s}}
for any $\om ,\ti{\om}\in C_T\ell ^2_s$.
\end{lem}
\begin{proof}
We focus on the estimate of the first term in $\Sc{N}_2$; noticing \eqref{bd:multiplier}, we can handle the second and the third terms similarly.
As in the proof of Lemma~\ref{lem:N1R}, we may assume $\LR{n_2}\ll \LR{n}$, and hence $\LR{n}\LR{n_{23}}\lec |\Phi |$.
Since $\LR{n},\LR{n_2},\LR{n_3}\lec \LR{n_1}$, it holds that
\eq{est2}{&\Big| \frac{\LR{n}^s\tilde{m}_1(n,n_1,n_2,n_3)\tilde{m}_1(n_2,n_4,n_5,n_6)}{\Phi}\Big| \\
&\lec \frac{\LR{n}^s}{\LR{n_1}\LR{n_5}} \lec \frac{\LR{n_1}^s\LR{n_3}^{(1/6)+}}{\LR{n_1}^{(1/2)+}\LR{n_3}^{(1/2)+}}\frac{1}{\LR{n_2}^{1/6}\LR{n_5}}.}
The claim then follows from Young's inequality and the Sobolev multiplication law:
\eqq{\norm{(\LR{\cdot}^{-1}\om _5)*(\om _4*\om _6)}{\ell ^2_{-1/6}}&\lec \tnorm{\LR{\cdot}^
{-1}\om _5}{\ell ^2_{(1/2)+}}\tnorm{\om _4*\om _6}{\ell ^2_{-1/6}}\\
&\lec \tnorm{\om _5}{\ell ^2}\tnorm{\om _4}{\ell ^2_s}\tnorm{\om _6}{\ell ^2_s},}
for $s>1/6$.
\end{proof}

%%%%%%%%%%%%%%%%%%%%%%%%%%%%%%%%%%%%%%%%%%

\subsection{Estimate for $\Sc{N}_3$}

We recall 
\eqq{
&\Sc{N}_3[\om ](n)\\
&=i\sum _{\mat{n=n_{123}\\ |\Phi |>M}}\frac{e^{it\Phi}\tilde{m}_1(n,n_1,n_2,n_3)}{\Phi}\om (n_1)\om (n_2)\bigg[ \sum _{n_3=n_{456}}e^{it\Phi _3}\tilde{m}_1^*(n_3,n_4,n_5,n_6)\om ^*(n_4)\om ^*(n_5)\om (n_6)\bigg] \\[-5pt]
&~~ +i\sum _{\mat{n=n_{123}\\ |\Phi |>M}}\frac{e^{it\Phi}\tilde{m}_1(n,n_1,n_2,n_3)}{\Phi}\om (n_1)\om (n_2)\bigg[ \sum _{n_3=n_{456}}e^{it\Phi _3}m_2^*(n_3,n_4,n_5,n_6)\om ^*(n_4)\om (n_5)\om ^*(n_6)\bigg] \\[-5pt]
&~~ +i\sum _{\mat{n=n_{123}\\ |\Phi |>M}}\frac{e^{it\Phi}\tilde{m}_1(n,n_1,n_2,n_3)}{\Phi}\om (n_1)\om (n_2)\bigg[ \sum _{n_3=n_{456}}e^{it\Phi _3}m_3^*(n_3,n_4,n_5,n_6)\om (n_4)\om ^*(n_5)\om ^*(n_6)\bigg] ,}
where 
\eqs{m_j^*(n_3,n_4,n_5,n_6):=\bbar{m_j(-n_3,-n_4,-n_5,-n_6)},\\
\Phi _3:=n_3|n_3|-n_4|n_4|-n_5|n_5|-n_6|n_6|.}

Similarly to $\Sc{N}_{1}$, we will decompose $\Sc{N}_3$ into two parts $\Sc{N}_{3,R}+\Sc{N}_{3,N\!R}$ and apply normal form reduction to $\Sc{N}_{3,N\!R}$.
Define the set $\Sc{A}_3\subset \Shugo{(n,n_1,\dots ,n_6)\in \Bo{Z}^7}{n=n_{123},\,n_3=n_{456}}$ as
\eqq{\Sc{A}_3:=\Shugo{(n,n_j)}{\LR{n_2}\ll \LR{n}\wedge \LR{n_3},\,\LR{n_5}\ll \LR{n_3}\wedge \LR{n_6},\,|n_{25}|\ll |n_{14}|,\, |nn_{25}|\ll |n_3n_{14}|},}
and define
\eqq{&\Sc{N}_{3,R}[\om ](n)\\
&:=i\!\!\!\sum _{\mat{n=n_{123}\\ |\Phi |>M}}\chf{\LR{n_2}\gec \LR{n_3}}\sum _{n_3=n_{456}}\chf{\Sc{A}_3^c}\frac{e^{it(\Phi +\Phi _3)}\tilde{m}_1(n,n_1,n_2,n_3)}{\Phi}\om (n_1)\om (n_2)\\
&\hx\hx \times \Big[ \tilde{m}_1^*(n_3,n_4,n_5,n_6)\om ^*(n_4)\om ^*(n_5)\om (n_6)+m_2^*(n_3,n_4,n_5,n_6)\om ^*(n_4)\om (n_5)\om ^*(n_6)\\
&\hxx\hx +m_3^*(n_3,n_4,n_5,n_6)\om (n_4)\om ^*(n_5)\om ^*(n_6)\Big] \\
&\hx +i\!\!\!\sum _{\mat{n=n_{123}\\ |\Phi |>M}}\chf{\LR{n_2}\ll \LR{n_3}}\sum _{n_3=n_{456}}\chf{\Sc{A}_3^c}\frac{e^{it(\Phi +\Phi _3)}\tilde{m}_1(n,n_1,n_2,n_3)\tilde{m}_1^*(n_3,n_4,n_5,n_6)}{\Phi}\\[-10pt]
&\hspace{210pt} \times \om (n_1)\om (n_2)\om ^*(n_4)\om ^*(n_5)\om (n_6),}
\eqq{\Sc{N}_{3,N\!R}[\om ](n)&:=i\!\!\!\sum _{\mat{n=n_{123}\\ |\Phi |>M}}\sum _{n_3=n_{456}}\!\!\!\chf{\Sc{A}_3}\frac{e^{it(\Phi +\Phi _3)}\tilde{m}_1(n,n_1,n_2,n_3)\tilde{m}_1^*(n_3,n_4,n_5,n_6)}{\Phi}\\[-10pt]
&\hspace{160pt} \times \om (n_1)\om (n_2)\om ^*(n_4)\om ^*(n_5)\om (n_6).}
Note that $\tilde{m}_1(n,n_1,n_2,n_3)\neq 0$ and $\LR{n_2}\ll \LR{n_3}$ imply $n_3<0$ and $m_2^*(n_3,n_4,n_5,n_6)=m_3^*(n_3,n_4,n_5,n_6)=0$.

We first treat $\Sc{N}_{3,R}$.
\begin{lem}\label{lem:N3R}
Let $1/6<s<1/2$.
Then, we have
\eqq{\norm{\Sc{N}_{3,R}[\om ]}{C_T\ell ^2_s}&\lec \tnorm{\om}{C_T\ell ^2_s}^5,\\
\norm{\Sc{N}_{3,R}[\om ]-\Sc{N}_{3,R}[\ti{\om}]}{C_T\ell ^2_s}&\lec \Big( \tnorm{\om}{C_T\ell ^2_s}^4+\tnorm{\ti{\om}}{C_T\ell ^2_s}^4\Big) \tnorm{\om -\ti{\om}}{C_T\ell ^2_s}}
for any $\om ,\ti{\om}\in C_T\ell ^2_s$.
\end{lem}

\begin{proof}
As before, the proof has been done for the frequency region $\LR{n}\lec \LR{n_2}$.

In the case $\LR{n_3}\lec \LR{n_2}\ll \LR{n}$, the proof is exactly the same as Lemma~\ref{lem:N2} since we have a bound similar to \eqref{est2}; using \eqref{bd:multiplier},
\eqq{&\Big| \frac{\LR{n}^s\tilde{m}_1(n,n_1,n_2,n_3)m_j^*(n_3,n_4,n_5,n_6)}{\Phi}\Big| \\
&\lec \frac{\LR{n}^s}{\LR{n_1}\LR{n_{\min}}}\lec \frac{\LR{n_1}^s\LR{n_2}^{(1/6)+}}{\LR{n_1}^{(1/2)+}\LR{n_2}^{(1/2)+}}\frac{1}{\LR{n_3}^{1/6}\LR{n_{\min}}}}
for $j=1,2,3$, where $n_{\min} :=\min _{k=4,5,6}|n_k|$.

By the definition of $\Sc{A}_3$, it suffices to prove the estimate in the following three cases.

i) $\LR{n_2}\ll \LR{n}\wedge \LR{n_3}$ and $\LR{n_5}\gec \LR{n_3}\wedge \LR{n_6}$.
This case is handled similarly to Case i) of Lemma~\ref{lem:N1R}.

ii) $\LR{n_2}\ll \LR{n}\wedge \LR{n_3}$, $\LR{n_5}\ll \LR{n_3}\wedge \LR{n_6}$, and $|n_{25}|\gec |n_{14}|$.
In this case we have $\LR{n},\LR{n_3}\lec \LR{n_1}$ and $\LR{n_2},\LR{n_5},\LR{n_6}\lec \LR{n_4}$, as well as $\LR{n_{14}}\lec \LR{n_2}\vee \LR{n_5}$.
Therefore,
\eqq{&\Big| \frac{\LR{n}^s\tilde{m}_1(n,n_1,n_2,n_3)\tilde{m}_1(n_3,n_4,n_5,n_6)}{\Phi}\Big| \lec \frac{\LR{n}^s}{\LR{n_2}\LR{n_5}}\lec \frac{\LR{n_1}^s}{\LR{n_{14}}[\LR{n_2}\wedge \LR{n_5}]}\\
&\lec \frac{\LR{n_1}^s}{\LR{n_{14}}[\LR{n_2}\wedge \LR{n_5}]}\frac{\LR{n_4}^{(1/6)+}[\LR{n_2}\vee \LR{n_5}]^{(1/6)+}\LR{n_6}^{(1/6)+}}{\LR{n_4}^{0+}[(\LR{n_2}\vee \LR{n_5})\wedge \LR{n_6}]^{(1/2)+}},}
which yields the claim if $s>1/6$ by use of Young's inequality.

iii) $\LR{n_2}\ll \LR{n}\wedge \LR{n_3}$, $\LR{n_5}\ll \LR{n_3}\wedge \LR{n_6}$, and $|nn_{25}|\gec |n_3n_{14}|$.
It holds that
\eqq{&\Big| \frac{\LR{n}^s\tilde{m}_1(n,n_1,n_2,n_3)\tilde{m}_1(n_3,n_4,n_5,n_6)}{\Phi}\Big| \\
&\lec \frac{\LR{n}^s\LR{n_3}}{\LR{n_1}\LR{n_2}\LR{n_5}}\lec \frac{\LR{n_1}^s\LR{n_{25}}}{\LR{n_{14}}\LR{n_2}\LR{n_5}}\lec \frac{\LR{n_1}^s}{\LR{n_{14}}[\LR{n_2}\wedge \LR{n_5}]},}
and we argue in the same manner as ii).
\end{proof}

For $\Sc{N}_{3,N\!R}$, it turns out that $|\Phi +\Phi _3|$ has a lower bound.
In fact, frequencies in the sum satisfy
\eqq{\Phi +\Phi _3&=n^2-n_1^2-n_2|n_2|+n_4^2-n_5|n_5|-n_6^2\\
&=2n_{14}n_{46}+2nn_{25}-n_{25}^2-n_2|n_2|-n_5|n_5|,}
while it holds that
\eqs{|nn_{25}|\ll |n_3n_{14}|\sim |n_{14}n_{46}|,\\
\big| n_{25}^2+n_2|n_2|+n_5|n_5|\big| \lec |n_{25}|\big( |n_2|\vee |n_5|\big) \ll |n_3n_{25}|\ll |n_3||n_{14}|,}
which imply $|\Phi +\Phi _3|\gec |n_3||n_{14}|$.
Note that we may assume $|n_3n_{14}|\neq 0$; otherwise, the argument for the case $|n_{25}|\gec |n_{14}|$ may be applied since we always have $n_3<0$.

We apply a differentiation by parts to $\Sc{N}_{3,N\!R}$ as
\eqq{&\Sc{N}_{3,N\!R}[\om ](n)\\
&=\p _t\bigg[ \sum _{\mat{n=n_{123}\\ |\Phi |>M}}\sum _{n_3=n_{456}}\frac{e^{it(\Phi +\Phi _3)}\chf{\Sc{A}_3}\tilde{m}_1(n,n_1,n_2,n_3)\tilde{m}_1^*(n_3,n_4,n_5,n_6)}{\Phi (\Phi +\Phi _3)}\\[-10pt]
&\hspace{190pt} \times \om (n_1)\om (n_2)\om ^*(n_4)\om ^*(n_5)\om (n_6)\bigg] \\
&\hx -\sum _{\mat{n=n_{123}\\ |\Phi |>M}}\sum _{n_3=n_{456}}\frac{e^{it(\Phi +\Phi _3)}\chf{\Sc{A}_3}\tilde{m}_1(n,n_1,n_2,n_3)\tilde{m}_1^*(n_3,n_4,n_5,n_6)}{\Phi (\Phi +\Phi _3)}\\[-10pt]
&\hspace{180pt} \times \p _t\Big[ \om (n_1)\om (n_2)\om ^*(n_4)\om ^*(n_5)\om (n_6)\Big] \\
&=:\p _t\Sc{N}_{3,0}[\om ](n)+\Sc{N}_{3,1}[\om ](n),}
where, as $\Sc{N}_{1,N\!R}$, the formal computations are justified by the absolute convergence of the sum in $\Sc{N}_{3,0}$.

\begin{lem}\label{lem:N31}
Let $1/6<s<1/2$.
Then, we have
\eqq{&\norm{\Sc{N}_{3,1}[\om ]}{C_T\ell ^2_s}\lec \Big( 1+\tnorm{u}{C_TH^s}^4+\tnorm{\om}{C_T\ell ^2_s}^3\Big) \tnorm{\om}{C_T\ell ^2_s}^4,\\
&\norm{\Sc{N}_{3,1}[\om ]-\Sc{N}_{3,1}[\ti{\om}]}{C_T\ell ^2_s}\\
&\hx \lec \Big( 1+\tnorm{u}{C_TH^s}^4+\tnorm{\ti{u}}{C_TH^s}^4\Big) \Big( \tnorm{\om}{C_T\ell ^2_s}^4+\tnorm{\ti{\om}}{C_T\ell ^2_s}^4\Big) \tnorm{u-\ti{u}}{C_TH^s}\\
&\hx \hx +\Big( 1+\tnorm{u}{C_TH^s}^4+\tnorm{\ti{u}}{C_TH^s}^4+\tnorm{\om}{C_T\ell ^2_s}^3+\tnorm{\ti{\om}}{C_T\ell ^2_s}^3\Big) \Big( \tnorm{\om}{C_T\ell ^2_s}^3+\tnorm{\ti{\om}}{C_T\ell ^2_s}^3\Big) \tnorm{\om -\ti{\om}}{C_T\ell ^2_s}}
for any real-valued mean-zero solutions $u,\ti{u}\in C_TH^s$ of \eqref{BO} and the corresponding solutions $\om ,\ti{\om}\in C_T\ell ^2_s$ of \eqref{eq_om} defined by \eqref{utov}, \eqref{vtoom}.
\end{lem}

\begin{proof}
Similarly to Lemma~\ref{lem:N11}, it suffices to show that
\eqq{&\norm{\sum _{\mat{n=n_{123}\\ |\Phi |>M}}\sum _{n_3=n_{456}}\!\!\!\!\Big| \frac{\chf{\Sc{A}_3}\tilde{m}_1(n,n_1,n_2,n_3)\tilde{m}_1^*(n_3,n_4,n_5,n_6)n_{\max}^{\frac{3}{2}-s}}{\Phi (\Phi +\Phi _3)\LR{n_1}^s\LR{n_2}^s\LR{n_4}^s\LR{n_5}^s\LR{n_6}^s}\om _1(n_1)\om _2(n_2)\om _4(n_4)\om _5(n_5)\om _6(n_6)\Big|}{\ell ^2_s}\\
&\lec \prod _{j=1,2,4,5,6}\tnorm{\om _j}{\ell ^2},}
where in the proof of this lemma we denote $\max _{j=1,2,4,5,6}\LR{n_j}$ by $n_{\max}$.

Now, we have $n_{\max}\sim \LR{n_1}\vee \LR{n_4}$ and, since $1/6<s<1/2$,
\eqq{&\Big| \frac{\LR{n}^s\chf{\Sc{A}_1}\tilde{m}_1(n,n_1,n_2,n_3)\tilde{m}_1^*(n_3,n_4,n_5,n_6)n_{max}^{\frac{3}{2}-s}}{\Phi (\Phi +\Phi _3)\LR{n_1}^s\LR{n_2}^s\LR{n_4}^s\LR{n_5}^s\LR{n_6}^s}\Big| \lec \frac{n_{\max}^{\frac{3}{2}-s}}{\LR{n_1}\LR{n_2}^{s+1}\LR{n_4}^{2s}\LR{n_5}^{s+1}\LR{n_{14}}}\\
&\sim \begin{cases}
\dfrac{1}{\LR{n_1}^{s+\frac{1}{2}}\LR{n_2}^{s+1}\LR{n_4}^{2s}\LR{n_5}^{s+1}}\lec \dfrac{1}{\LR{n_1}^{\frac{1}{2}+}\LR{n_2}^{s+1}\LR{n_4}^{\frac{1}{2}+}\LR{n_5}^{s+1}} &\text{if $\LR{n_1}\gg \LR{n_4}$},\\[15pt]
\dfrac{1}{\LR{n_1}^{3s-\frac{1}{2}}\LR{n_2}^{s+1}\LR{n_5}^{s+1}\LR{n_{14}}}\lec \dfrac{1}{\LR{n_2}^{s+1}\LR{n_5}^{s+1}\LR{n_{14}}^{1+}} &\text{if $\LR{n_1}\sim \LR{n_4}$},\\[15pt]
\dfrac{1}{\LR{n_1}\LR{n_2}^{s+1}\LR{n_4}^{3s-\frac{1}{2}}\LR{n_5}^{s+1}}\lec \dfrac{1}{\LR{n_1}^{\frac{1}{2}+}\LR{n_2}^{s+1}\LR{n_5}^{s+1}\LR{n_{46}}^{\frac{1}{2}+}} &\text{if $\LR{n_1}\ll \LR{n_4}$},
\end{cases}}
where in the case $\LR{n_1}\ll \LR{n_4}$ we have used the fact that $\LR{n_{46}}\sim \LR{n_3}\lec \LR{n_1}$.
Then, the claimed estimates are deduced by Young's inequality.
\end{proof}

Finally, the estimates for $\Sc{N}_{3,0}$ follow from Lemma~\ref{lem:N31} just as Lemma~\ref{lem:N10} follows from Lemma~\ref{lem:N11}.
\begin{lem}\label{lem:N30}
Let $1/6<s<1/2$.
Then, there exists $\de >0$ such that we have
\eqq{\norm{\Sc{N}_{3,0}[\om ]}{C_T\ell ^2_s}&\lec M^{-\de}\tnorm{\om}{C_T\ell ^2_s}^5,\\
\norm{\Sc{N}_{3,0}[\om ]-\Sc{N}_{3,0}[\ti{\om}]}{C_T\ell ^2_s}&\lec M^{-\de}\Big( \tnorm{\om}{C_T\ell ^2_s}^4+\tnorm{\ti{\om}}{C_T\ell ^2_s}^4\Big) \tnorm{\om -\ti{\om}}{C_T\ell ^2_s}}
for any $\om ,\ti{\om}\in C_T\ell ^2_s$.
\end{lem}

%%%%%%%%%%%%%%%%%%%%%%%%%%%%%%%%%%%%%%%%%%

\subsection{Conclusion}

We have seen that a real-valued mean-zero solution $u\in C_TH^s$ of \eqref{BO} with $1/6<s<1/2$ and the corresponding solution $\om \in C_T\ell ^2_s$ of \eqref{eq_om} defined by \eqref{utov} and \eqref{vtoom} satisfy the equation
\eqq{\p _t\om (t,n)=\p _t\Sc{N}^{(0)}[\om ](t,n)+\Sc{N}^{(1)}[u,\om ](t,n),\qquad t\in [0,T]}
with
\eqq{\Sc{N}^{(0)}[\om ]&:=\Sc{N}_0[\om ]+\Sc{N}_{1,0}[\om ]+\Sc{N}_{3,0}[\om ],\\
\Sc{N}^{(1)}[u,\om ]&:=\Sc{R} [u,\om ]+\Sc{N}_R[\om ]+\Sc{R}_1[u,\om ]+\Sc{N}_{1,R}[\om ]\\
&\qquad +\Sc{N}_{1,1}[\om ]+\Sc{N}_2[\om ]+\Sc{N}_{3,R}[\om ]+\Sc{N}_{3,1}[\om ],}
or the associated integral equation
\eq{eq_omZ}{\om (t,n)&=\om (0,n)+\Big[ \Sc{N}^{(0)}[\om ](t,n)-\Sc{N}^{(0)}[\om ](0,n)\Big] \\
&\hx +\int _0^t\Sc{N}^{(1)}[u,\om ](\tau ,n)\,d\tau ,\qquad t\in [0,T],}
both in the classical sense for each $n>0$.

Combining Lemmas~\ref{lem:R}--\ref{lem:N0} and \ref{lem:N1R}--\ref{lem:N30}, we obtain the following estimates.
\begin{prop}\label{prop}
Let $1/6<s<1/2$ and $u,\ti{u}\in C_TH^s$ be any real-valued mean-zero solutions of \eqref{BO}, with $\om ,\ti{\om}\in C_T\ell ^2_s$ being the corresponding solutions of \eqref{eq_om} defined by \eqref{utov} and \eqref{vtoom}.
Then, there exist $\de =\de (s)>0$ and $C=C(s,\tnorm{u}{C_TH^s},\tnorm{\ti{u}}{C_TH^s})>0$%
\footnote{From Lemma~\ref{lem:exp}, $\tnorm{\om}{C_T\ell ^2_s}$ is controlled by $\tnorm{u}{C_TH^s}$.} 
such that we have
\eqq{\norm{\Sc{N}^{(0)}[\om ]-\Sc{N}^{(0)}[\ti{\om}]}{C_T\ell ^2_s}&\le CM^{-\de}\tnorm{\om -\ti{\om}}{C_T\ell ^2_s},\\
\norm{\Sc{N}^{(1)}[u,\om ]-\Sc{N}^{(1)}[\ti{u},\ti{\om}]}{C_T\ell ^2_s}&\le CM\Big( \tnorm{u-\ti{u}}{C_TH^s}+\tnorm{\om -\ti{\om}}{C_T\ell ^2_s}\Big) .}
\end{prop}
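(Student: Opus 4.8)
The plan is to assemble the difference estimates already established in Lemmas~\ref{lem:R}--\ref{lem:N0} and \ref{lem:N1R}--\ref{lem:N30}, keeping careful track of the power of $M$ carried by each individual piece of $\Sc{N}^{(0)}$ and $\Sc{N}^{(1)}$, and to absorb every occurrence of $\norm{\om}{C_T\ell ^2_s}$ and $\norm{\ti{\om}}{C_T\ell ^2_s}$ into the constant $C=C(s,\tnorm{u}{C_TH^s},\tnorm{\ti{u}}{C_TH^s})$. The key preliminary observation, already noted in the footnote to the statement, is that from \eqref{utov} and Lemma~\ref{lem:exp} one has $\norm{\om (t)}{\ell ^2_s}=\norm{\hatp V(t)}{H^s}\lec \norm{V(t)}{H^{s+1}}\lec 1+\norm{u(t)}{H^s}^2$, so that $\norm{\om}{C_T\ell ^2_s}\lec 1+\norm{u}{C_TH^s}^2$ and likewise for $\ti{\om}$; consequently every prefactor of the shape $1+\norm{u}{C_TH^s}^4+\norm{\ti{u}}{C_TH^s}^4+\norm{\om}{C_T\ell ^2_s}^k+\norm{\ti{\om}}{C_T\ell ^2_s}^k$ appearing on the right-hand sides of the cited lemmas is bounded by a single constant $C$ of the asserted type.

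For the first inequality, I would apply the difference parts of Lemmas~\ref{lem:N0}, \ref{lem:N10} and \ref{lem:N30} to the three summands of $\Sc{N}^{(0)}=\Sc{N}_0+\Sc{N}_{1,0}+\Sc{N}_{3,0}$. Each supplies an exponent $\de _j>0$ together with the gain $M^{-\de _j}$; setting $\de :=\min \{ \de _0,\de _{1,0},\de _{3,0}\} >0$ and using $M\ge 1$, all three bounds are dominated by $CM^{-\de}\norm{\om -\ti{\om}}{C_T\ell ^2_s}$ once the $\om$-prefactors are absorbed as above, and summing yields the claim.

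For the second inequality, I would apply the difference parts of Lemmas~\ref{lem:R}, \ref{lem:N_R}, \ref{lem:R1}, \ref{lem:N1R}, \ref{lem:N11}, \ref{lem:N2}, \ref{lem:N3R} and \ref{lem:N31} to the eight summands of $\Sc{N}^{(1)}$. Among these, only $\Sc{N}_R$ (Lemma~\ref{lem:N_R}) carries a power of $M$, namely the first power, while every other piece carries $M^0$ and is therefore $\le M$ times its own bound since $M\ge 1$. The dependence on $u,\ti{u}$ enters only through $\Sc{R}$, $\Sc{R}_1$, $\Sc{N}_{1,1}$ and $\Sc{N}_{3,1}$, whose difference bounds are, after absorbing the $\om$-prefactors, of the form $C\big( \norm{u-\ti{u}}{C_TH^s}+\norm{\om -\ti{\om}}{C_T\ell ^2_s}\big)$, while the remaining four pieces contribute $C\norm{\om -\ti{\om}}{C_T\ell ^2_s}$. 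Summing over all eight terms produces $\norm{\Sc{N}^{(1)}[u,\om ]-\Sc{N}^{(1)}[\ti{u},\ti{\om}]}{C_T\ell ^2_s}\le CM\big( \norm{u-\ti{u}}{C_TH^s}+\norm{\om -\ti{\om}}{C_T\ell ^2_s}\big)$.

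Since all the genuine analytic content resides in the lemmas, there is no real obstacle at this stage; the only points deserving a word of care are that $\de$ may be chosen uniformly, being the minimum of finitely many positive exponents, and that the norms of $\om ,\ti{\om}$ may legitimately be swallowed by $C$, which is exactly the role of the bound $\norm{\om}{C_T\ell ^2_s}\lec 1+\norm{u}{C_TH^s}^2$ recorded above. One should also check — though it is immediate — that the standing hypotheses on $u,\ti{u}$ and on $\om ,\ti{\om}$ match those required by Lemmas~\ref{lem:N11} and \ref{lem:N31}, the only lemmas in the list whose statements refer back to the equations \eqref{BO} and \eqref{eq_om}.
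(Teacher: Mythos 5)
Your proposal is correct and follows essentially the same route as the paper, which proves the proposition simply by combining the difference estimates of Lemmas~\ref{lem:R}--\ref{lem:N0} and \ref{lem:N1R}--\ref{lem:N30}, absorbing the $\ell^2_s$-norms of $\om,\ti{\om}$ into $C$ via Lemma~\ref{lem:exp} and taking $\de$ to be the minimum of the finitely many exponents. Your bookkeeping of which pieces carry the factor $M$ (only $\Sc{N}_R$) and which carry the $\norm{u-\ti{u}}{C_TH^s}$ term (those involving $\Sc{R}[u,\om]$, i.e.\ $\Sc{R}$, $\Sc{R}_1$, $\Sc{N}_{1,1}$, $\Sc{N}_{3,1}$) matches the structure of the cited lemmas exactly.
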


%%%%%%%%%%%%%%%%%%%%%%%%%%%%%%%%%%%%%%%%%%%%%%%%%%%%%
%%%%%%%%%%%%%%%%%%%%%%%%%%%%%%%%%%%%%%%%%%%%%%%%%%%%%

\bigskip
\section{Proof of Theorem~\ref{thm}}\label{sec:proof}

It suffices to verify the claim for $1/6<s<1/2$.
Let $u,\ti{u}\in C_TH^s$ be two real-valued mean-zero solutions of \eqref{BO} on a time interval $[0,T]$ with a common initial datum at $t=0$.
Define the corresponding functions $V,\ti{V}\in C_TH^{s+1}$, $v,\ti{v}\in C_TH^s$, and $\om ,\ti{\om}\in C_T\ell ^2_s$ by \eqref{utov} and \eqref{vtoom}.
By a continuity argument, it suffices to prove $u(t)=\ti{u}(t)$ on $[0,T']$ for some $0<T'\le T$.
In the following, $\ti{C}>0$ denotes any constant depending on $s,\tnorm{u}{C_TH^s},\tnorm{\ti{u}}{C_TH^s}$.

Let $N>0$ be a large constant to be chosen later.
We estimate $P_{\le N}(u-\ti{u})$ and $P_{>N}(u-\ti{u})$ separately.
For the low frequency part, which is smooth and satisfies the integral equation
\eqq{P_{\le N}(u-\ti{u})(t)=\int _0^te^{-(t-\tau )\H \p _x^2}P_{\le N}\p _x(u^2-\ti{u}^2)(\tau )\,d\tau ,}
we use the Sobolev multiplication law and obtain
\eqq{\norm{P_{\le N}(u-\ti{u})}{C_{T'}H^s}&\le \int _0^{T'}\norm{P_{\le N}\p _x(u^2-\ti{u}^2)(t)}{H^s}\,dt\\
&\le \ti{C}T'N^2\tnorm{u-\ti{u}}{C_{T'}H^s}}
for any $0<T'\le T$.

For the high frequency part, we use \eqref{u_via_v} to have
\eqq{&\norm{P_{>N}(u-\ti{u})}{C_{T'}H^s}=2\norm{P_{>N}P_+(u-\ti{u})}{C_{T'}H^s}\\
&\le 2\norm{P_{>N}P_+(\bbar{V}P_+v-\bbar{\ti{V}}P_+\ti{v})}{C_{T'}H^s}+2\norm{P_{>N}P_+(\bbar{V}P_-v-\bbar{\ti{V}}P_-\ti{v})}{C_{T'}H^s}.
}
By Lemma~\ref{lem:exp}, we estimate the second term as
\eqq{&\norm{P_{>N}P_+(\bbar{V}P_-v-\bbar{\ti{V}}P_-\ti{v})}{C_{T'}H^s}\le \norm{P_{>N}P_+\bbar{V}\cdot P_-v-P_{>N}P_+\bbar{\ti{V}}\cdot P_-\ti{v}}{C_{T'}H^s}\\
&\le \ti{C}\Big( \norm{P_{>N}(V-\ti{V})}{C_{T'}H^{1}}\tnorm{v}{C_{T'}H^s}+\norm{P_{>N}\ti{V}}{C_{T'}H^{1}}\tnorm{v-\ti{v}}{C_{T'}H^s}\Big) \\
&\le \ti{C}N^{-s}\Big( \tnorm{V-\ti{V}}{C_{T'}H^{s+1}}\tnorm{v}{C_{T'}H^s}+\tnorm{\ti{V}}{C_{T'}H^{s+1}}\tnorm{v-\ti{v}}{C_{T'}H^s}\Big) \\
&\le \ti{C}N^{-s}\tnorm{u-\ti{u}}{C_{T'}H^s}.}
The first term is estimated as
\eqq{&\norm{P_{>N}P_+(\bbar{V}P_+v-\bbar{\ti{V}}P_+\ti{v})}{C_{T'}H^s}\\
&\le \norm{P_{>N/2}(V-\ti{V})}{C_{T'}H^{1}}\tnorm{v}{C_{T'}H^s}+\tnorm{V-\ti{V}}{C_{T'}H^{1}}\norm{P_{>N/2}v}{C_{T'}H^s}\\
&\hx +\tnorm{\ti{V}}{C_{T'}H^{1}}\norm{P_+(v-\ti{v})}{C_{T'}H^s}\\
&\le \ti{C}\Big( \big( N^{-s}+\norm{P_{>N/2}v}{C_{T'}H^s}\big) \tnorm{u-\ti{u}}{C_{T'}H^s}+\tnorm{\chf{n>0}(\om -\ti{\om})}{C_{T'}\ell ^2_s}\Big) .}
Using the equation \eqref{eq_omZ} and Proposition~\ref{prop}, we have
\eqq{\tnorm{\chf{n>0}(\om -\ti{\om})}{C_{T'}\ell ^2_s}&\le \ti{C}\big( M^{-\de}+T'M\big) \Big( \tnorm{u-\ti{u}}{C_{T'}H^s}+\tnorm{\om -\ti{\om}}{C_{T'}\ell ^2_s}\Big) \\
&\le \ti{C}\big( M^{-\de}+T'M\big) \tnorm{u-\ti{u}}{C_{T'}H^s}.}
Hence, we obtain the estimate
\eqq{&\tnorm{u-\ti{u}}{C_{T'}H^s}\\
&\le \ti{C}\Big( T'\big( N^2+M\big) +N^{-s}+\norm{P_{>N/2}v}{C_{T'}H^s}+M^{-\de}\Big) \tnorm{u-\ti{u}}{C_{T'}H^s}}
for any $0<T'\le T$.
We take $N,M>0$ sufficiently large and then take $T'$ small according to $N,M$ so that
\eqq{\ti{C}\Big( T'\big( N^2+M\big) +N^{-s}+\norm{P_{>N/2}v}{C_{T}H^s}+M^{-\de}\Big) <1.}
For such a $T'$ we obtain $\tnorm{u-\ti{u}}{C_{T'}H^s}=0$ as desired.
This concludes the proof of Theorem~\ref{thm}.

%%%%%%%%%%%%%%%%%%%%%%%%%%%%%%%%%%%%%%%%%%%%%%%%%%%%%
%%%%%%%%%%%%%%%%%%%%%%%%%%%%%%%%%%%%%%%%%%%%%%%%%%%%%

\section*{Acknowledgements}
The result in this article was obtained while the author was visiting the University of Chicago in 2013--2014.
He would like to express his deep gratitude to Professor Carlos E. Kenig for fruitful discussions and also to all members of the Department of Mathematics for their heartwarming hospitality.
This work is partially supported by Japan Society for the Promotion of Science (JSPS) [grant numbers JP24740086, JP16K17626].

%%%%%%%%%%%%%%%%%%%%%%%%%%%%%%%%%%%%%%%%%%%%%%%%%%%%%
%%%%%%%%%%%%%%%%%%%%%%%%%%%%%%%%%%%%%%%%%%%%%%%%%%%%%

\end{document}